\documentclass[11pt, a4paper]{article}
\usepackage{amsfonts,amssymb,amsmath,amscd,latexsym,makeidx,theorem}

\setlength{\voffset}{-1.2 cm} 
 \setlength{\textwidth}{16.0cm}
\setlength{\textheight}{21.5cm}
 \addtolength{\hoffset}{-1.4cm}

\newtheorem{thm}{Theorem}[section]
\newtheorem{lem}[thm]{Lemma}
\newtheorem{prop}[thm]{Proposition}

\newtheorem{defn}{Definition}[section]

\newenvironment{proof}{\noindent\emph{Proof.}}{\hfill$\square$\medskip}

\newcommand{\rn}{\mathbb{R}^{2m}}
\newcommand{\vol}{\mathrm{vol}}

\DeclareMathOperator{\diver}{div}
\DeclareMathOperator{\sign}{sign}
\DeclareMathOperator{\supp}{supp}

\title{Conformal metrics on $\rn$ with
constant Q-curvature,  prescribed volume and asymptotic behavior}

\date{January 5, 2014}

\author{Ali Hyder \thanks{The authors are supported by the Swiss National Science Foundation.} \\ {\small Universit\"at Basel}\\ {\small \texttt{ali.hyder@unibas.ch} }\and Luca Martinazzi${}^*$ \\ {\small Universit\"at Basel}\\ {\small \texttt{luca.martinazzi@unibas.ch} }}

\begin{document}

\maketitle

\begin{abstract}
We study the solutions $u\in C^\infty(\rn)$ of the problem
\begin{equation}\label{P0}
 (-\Delta)^mu=\bar Qe^{2mu}, \text{ where }\bar Q=\pm (2m-1)!, \quad V :=\int_{\rn}e^{2mu}dx <\infty,
\end{equation}
particularly when $m>1$. Problem \eqref{P0} corresponds to finding conformal metrics $g_u:=e^{2u}|dx|^2$ on $\rn$ with constant $Q$-curvature $\bar Q$ and finite volume $V$. Extending previous works of Chang-Chen, and Wei-Ye, we show that both the value $V$ and the asymptotic behavior of $u(x)$ as $|x|\to \infty$ can be simultaneously prescribed, under certain restrictions. When $\bar Q= (2m-1)!$ we need to assume $V<\vol(S^{2m})$, but surprisingly for $\bar Q=-(2m-1)!$ the volume $V$ can be chosen arbitrarily.
\end{abstract}

 \section{Introduction}
We consider the equation 
\begin{equation}\label{main-1}
 (-\Delta)^mu=(2m-1)!e^{2mu} ~~\text{in } \rn,
\end{equation}
where $u\in C^\infty(\rn)$ and satisfies
\begin{equation}\label{volume}
V :=\int_{\rn}e^{2mu} dx<\infty.
\end{equation}
Equation \eqref{main-1} has been widely studied because of its geometric meaning. Indeed if $u$ solves \eqref{main-1}, then the conformal metric $g_u:=e^{2u}|dx|^2$ on $\rn$ (here $|dx|^2$ denotes the Euclidean metric on $\rn$) has constant $Q$-curvature equal to $(2m-1)!$. For a brief discussion of the geometric meaning of \eqref{main-1} and a survey of related previous works we refer to the introduction of \cite{LM-classification} and the references therein.
Here we only mention some relevant facts, necessary to contextualize the results of our present work.

First of all the assumption that $u\in C^\infty(\rn)$ is not restrictive, since any weak solution $u\in L^1_{\mathrm{loc}}(\rn)$ of \eqref{main-1} with right-hand side in $L^1_{\mathrm{loc}}(\rn)$ is smooth, see e.g. \cite[Corollary 8]{LM-classification}. Also the particular choice of the constant $(2m-1)!$ in \eqref{main-1} is not restrictive, since it can be changed by considering $u+C$ for $C\in \mathbb{R}$.

Next we recall that  Problem \eqref{main-1}-\eqref{volume} possesses the following explicit radially symmetric solutions
$$u(x)=\log\left(\frac{2\lambda}{1+\lambda^2|x-x_0|^2}\right),\quad \lambda>0, \,x_0\in\rn,$$
which are called spherical solutions, since they are obtained (up to a M\"obius transformation) by pulling back the round metric of $S^{2m}$ onto $\rn$ via the stereographic projection.

While in dimension $2$, i.e. for $m=1$, such spherical solutions exhaust the set of solutions to \eqref{main-1}-\eqref{volume}, as proven by W. Chen and C. Li \cite{CL}, in the case $m\ge 2$ A. Chang and W. Chen \cite{CC} showed that non-spherical solutions do exist. In fact they proved that for any $m\ge 2$ and every $V\in (0,\vol(S^{2m}))$ there exists a (non-spherical) solution to \eqref{main-1}-\eqref{volume}. This suggests to investigate the properties of such solutions. 
Building upon the previous work of A. Chang and P. Yang \cite{CC}, C-S. Lin for $m=2$ and L. Martinazzi for $m>2$ proved:

\medskip

\noindent\textbf{Theorem A (\cite{Lin}, \cite{LM-classification})} \emph{If $u$ solves \eqref{main-1}-\eqref{volume}, then $u$ has the asymptotic behavior
\begin{equation}\label{uasym}
u(x)=-\alpha \log(|x|) -P(x)+C+o(1),\quad o(1)\to 0\text{ as }|x|\to\infty,
\end{equation}
where $\alpha=\frac{2V}{\vol(S^{2m})}$ and $P$ is a polynomial of degree at most $2m-2$ bounded from below. Moreover $P$ is constant if and only if $u$ is spherical. When $m=2$ one has $V\in (0,\vol(S^4)]$ and $V=\vol(S^4)$ if and only if $u$ is spherical.}

\medskip

J. Wei and D. Ye complemented the result of C-S. Lin by showing, among other things:

\medskip

\noindent\textbf{Theorem B (\cite{W-Y})} \emph{For any $V\in (0,\vol(S^4))$ and $P(x)=\sum_{j=1}^4 a_jx_j^2$ with $a_j>0$, Problem (\ref{main-1})-\eqref{volume} has a solution with asymptotic expansion \eqref{uasym} for some $C\in\mathbb{R}$.}

\medskip

The first result which we prove here is an extension of the result of J. Wei and D. Ye to the case $m>2$.
We will prove the existence of solutions to \eqref{main-1}-\eqref{volume} having the asymptotic behavior \eqref{uasym}
where $P$ will be any given polynomial of degree at most $2m-2$ satisfying
\begin{equation}\label{poly}
\lim_{|x|\to \infty} x\cdot \nabla P(x)=\infty,
\end{equation}
while $\alpha>0$ is determined by $V\in (0,\vol(S^{2m}))$.
More precisely, define
$$\mathcal{P}_m:=\left\{P \text{ polynomial in }\rn : \deg P\le 2m-2, \,\eqref{poly} \text{ holds} \right\}.$$
It is worth noticing that \eqref{poly} is equivalent to the apparently stronger condition
\begin{equation}\label{poly2}
\liminf_{|x|\to \infty}\frac{P(x)}{|x|^a}>0\quad \text{and}\quad \liminf_{|x|\to \infty}\frac{x\cdot \nabla P(x)}{|x|^a}>0,\quad \text{for some }a>0.
\end{equation}
Indeed \eqref{poly} implies the second inequality of \eqref{poly2} by a subtle result of E. Gorin (see \cite[Theorem 3.1]{gor}), and the second inequality in \eqref{poly2} implies the first one, since one can write 
$$P(x)=\int_0^{|x|}\frac{d}{dr}P\left(r\frac{x}{|x|}\right)dr +P(0).$$
A simple example of polynomial belonging to $\mathcal{P}_m$ is
$$ P(x)=\sum_{j=1}^{2m}a_jx_j^{2i_j} +p(x),$$
where $a_j>0$, $i_j\in \{1,2,\dots,m-1\}$ for $1\le j\le 2m$, and $p$ is a polynomial of degree at most $2\min\{i_j\}-1$, but in general $\mathcal{P}_m$ contains polynomials whose higher degree monomials do not split in such a simple way.

\begin{thm}\label{thm_2}
For any integer $m\ge 2$, given $P\in \mathcal{P}_m$  and $V\in (0,\vol(S^{2m}))$,  
 there exists a solution of \eqref{main-1}-\eqref{volume} having the asymptotic behavior \eqref{uasym} with $\alpha=\frac{2V}{\vol(S^{2m})}$.
 \end{thm}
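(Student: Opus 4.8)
The natural approach is to construct the solution as a perturbation of an explicit "approximate" solution that already carries the correct leading-order logarithmic decay and the prescribed polynomial $P$. Concretely, I would set $u = u_0 + v$, where $u_0(x) := -\alpha\log(|x|) - P(x)$ away from the origin (suitably modified near $0$, e.g. $u_0 = -\tfrac{\alpha}{2}\log(1+|x|^2) - P(x)$ or a smoothed version), with $\alpha = \tfrac{2V}{\mathrm{vol}(S^{2m})}$. The point of condition \eqref{poly} — equivalently \eqref{poly2} — is that $x\cdot\nabla P(x)\to\infty$ forces $e^{2mu_0}$ to decay super-polynomially, so that $e^{2mu_0}\in L^1(\rn)$ and moreover $\int e^{2mu_0}$ can be made to have any prescribed value by an overall additive constant; this is how $V$ enters. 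One then needs to solve
\[
(-\Delta)^m v = (2m-1)! e^{2m u_0} e^{2mv} - (-\Delta)^m u_0 \quad \text{in } \rn,
\]
and the right-hand side should be treated as a small, fast-decaying source once $u_0$ is chosen well (note $(-\Delta)^m u_0$ is itself a fast-decaying error plus, possibly, a term supported near the origin coming from $-\alpha\log|x|$, whose fundamental-solution structure is exactly matched by the $\bar Q e^{2mu}$ term at main order).

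The key analytic device is the integral representation: instead of inverting $(-\Delta)^m$ abstractly, I would look for $v$ of the form
\[
v(x) = \frac{(2m-1)!}{\gamma_m}\int_{\rn}\log\frac{|y|}{|x-y|}\, f(y)\,dy + w(x),
\]
where $f = e^{2mu_0}e^{2mv}$ (the nonlinear term) plus the lower-order errors, $\gamma_m$ is the constant in the fundamental solution of $(-\Delta)^m$ in $\rn$, and $w$ absorbs the contribution of $(-\Delta)^m u_0$; the normalization of the kernel $\log\frac{|y|}{|x-y|}$ is chosen so that the representation behaves like $-\alpha\log|x| + O(1)$ at infinity with the right $\alpha$ (this is the mechanism in Chang–Chen and in \cite{LM-classification}, Theorem A). This turns the PDE into a fixed-point problem $v = T(v)$ on a weighted space of functions with prescribed decay, e.g. $\{v : |v(x)| \le C(1+|x|)^{-\sigma}\}$ or a space controlling $v$ and finitely many derivatives against the weight $e^{2mu_0}$. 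One shows $T$ is a contraction (or applies Schauder/Leray–Schauder) for a suitable choice of the free additive constant in $u_0$ and, if needed, a scaling parameter; the volume identity $\int e^{2mu} = V$ is then recovered by a continuity/degree argument in that constant, exactly as one fixes $\alpha$.

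The main obstacle — and the reason Gorin's theorem is invoked in the introduction — is controlling the nonlinear term $e^{2mu_0}e^{2mv}$ and the convolution $\int \log\frac{|y|}{|x-y|} e^{2mu_0(y)}e^{2mv(y)}\,dy$ uniformly, which requires genuine integrability and decay of $e^{2mu_0}$, and this is exactly where $\deg P \le 2m-2$ together with \eqref{poly} is used: \eqref{poly} alone only gives $x\cdot\nabla P\to\infty$, but Gorin's result upgrades this to the polynomial lower bound \eqref{poly2}, which is what actually yields $e^{2mu_0(x)} \lesssim e^{-c|x|^a}$ and hence the estimates that close the fixed point. A secondary subtlety is that $P$ need not be proper (it can vanish on lower-dimensional sets, e.g. be constant along some directions up to lower order), so the decay of $e^{2mu_0}$ is anisotropic; one must be careful that the convolution estimates, and the claim that the representation formula produces precisely $-\alpha\log|x| + C + o(1)$ rather than picking up extra polynomial growth, survive this anisotropy. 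I expect the bulk of the work to be these weighted convolution estimates and the verification that the constructed $v$ is $o(1)$ at infinity (so that \eqref{uasym} holds with $P$ unchanged and no extra polynomial appears), with the fixed-point step itself being comparatively routine once the function space is correctly chosen.
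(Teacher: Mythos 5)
Your overall framework does match the paper's: you perturb off the explicit profile $-\alpha\log|x|-P$, reduce to an equation for a remainder $v$ vanishing at infinity, use Gorin's theorem (i.e.\ \eqref{poly}$\Rightarrow$\eqref{poly2}) to get super-polynomial decay of the weight $K\sim e^{-2mP-2m\alpha\log|x|}$, and close with a fixed-point argument (the paper fixes the additive constant by making it a functional $c_v$ of $v$, so that the right-hand side has zero mean and $(-\Delta)^m$ can be inverted on McOwen's weighted spaces, rather than by a separate degree argument in the constant, but that is a minor difference). The genuine gap is in what you call the ``comparatively routine'' fixed-point step. The problem is not perturbative: $Ke^{2mv}$ carries no smallness parameter, so no contraction is available, and Schauder/Leray--Schauder (Lemma \ref{leray}) requires uniform a priori bounds on all solutions of $tTv=v$, $t\in(0,1]$. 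Those bounds are the heart of the proof and they are exactly where the hypothesis $V<\vol(S^{2m})$ enters: since $\int_{\rn}Ke^{2m(v+c_v)}dx=\alpha\gamma_m=(2m-1)!\,V$, the condition $V<\vol(S^{2m})$ keeps the total curvature strictly below the quantization threshold $\Lambda_1=(2m-1)!\vol(S^{2m})$, and the blow-up/quantization theory (Theorem \ref{useful}, after Br\'ezis--Merle and Robert) then rules out concentration and yields the local uniform upper bounds of Theorem \ref{lemma_1}. Your plan never uses $V<\vol(S^{2m})$ at all; an argument of the type you describe, if it closed, would produce solutions for every $V$, contradicting Lin's result (Theorem A) that for $m=2$ no solution has $V>\vol(S^4)$. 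So some step of your scheme must fail precisely in the regime the hypothesis excludes, and you have not identified which estimate that is or how the hypothesis rescues it.

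A second, related misreading concerns the role of \eqref{poly}. You use it only to get $e^{2mu_0}\lesssim e^{-c|x|^a}$, i.e.\ the first half of \eqref{poly2}; but mere integrability/decay of the weight (which would already follow from \eqref{poly3}-type conditions) is not why \eqref{poly} is assumed. Its essential use is in the a priori bounds at infinity: a Pohozaev-type identity combined with the lower bound $x\cdot\nabla P(x)\gtrsim|x|^a$ (the second half of \eqref{poly2}, see \eqref{stimapo} and Lemma \ref{int<epsilon}) gives a uniform smallness of $\int_{\rn\setminus B_{R_0}}|K|e^{2m\bar w}dx$, which, via a Kelvin transform and a Br\'ezis--Merle-type exponential-integrability estimate, yields uniform upper bounds for the solution outside a compact set. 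Without this mechanism (or a substitute for it), your weighted convolution estimates control the linear solution operator but not the family of solutions of the nonlinear problem, and the fixed point cannot be closed.
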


The restriction $V<\vol(S^{2m})$ in Theorem \ref{thm_2} is necessary when $m=2$ because of the result of C-S. Lin (Theorem A), but appears to be only a technical issue when $m\ge 3$.
In fact for $m=3$ L. Martinazzi recently proved that there are solutions to (\ref{main-1})-\eqref{volume} with $V$ arbitrarily large, see \cite{LM-volume}.
 The crucial step in which we need $V$ to be smaller than $\vol(S^{2m})$ is Theorem \ref{lemma_1} below, a compactness result which follows form the blow-up analysis of sequences of prescribed $Q$-curvature in open domains of $\rn$ (Theorem \ref{useful} below) proven by L. Martinazzi, and inspired by previous works of H. Br\'ezis and F. Merle \cite{BM} and F. Robert \cite{rob}. This compactness is used to prove the a priori bounds necessary to run the fixed point argument of \cite{W-Y}, which we closely follow. For $m>2$ it remains open whether one can prescribe $P\in \mathcal{P}_m$ and $V\ge \vol(S^{2m})$ in Theorem \ref{thm_2}.

From the work of Br\'ezis-Merle we also borrow a simple but fundamental critical estimate, whose generalization is Lemma \ref{a2m} below, which is used in Lemma \ref{complement} below. 

\medskip

As we shall now show, things go differently when the prescribed $Q$-curvature is negative.
Consider the equation
\begin{equation}\label{main-2}
 (-\Delta)^mu=-(2m-1)!e^{2mu} ~~\text{in }\rn,
\end{equation}
whose solutions give rise to metrics $g_u=e^{2u}|dx|^2$ of $Q$-curvature $-(2m-1)!$ in $\rn$. One can easily verify that under the assumption \eqref{volume} Equation \eqref{main-2} has no solutions when $m=1$, see e.g. \cite[Proposition 6]{LM-negative}. On the other hand, when $m\ge 2$ we have:

\medskip

\noindent\textbf{Theorem C (\cite{LM-negative})} \emph{For every $m\ge 2$ there is some $V>0$ such that Problem \eqref{main-2}-\eqref{volume} has a radially symmetric solution. Every solution to \eqref{main-2}-\eqref{volume} (a priori not necessarily radially symmetric) has the asymptotic behavior given by \eqref{uasym}
where $\alpha=-\frac{2V}{\vol(S^{2m})}$ and $P$ is a \emph{non-constant} polynomial of degree at most $2m-2$ bounded from below.}

\medskip

Notice that, contrary to Chang-Chen's result \cite{CC}, the existence part of Theorem C does not allow to prescribe $V$. Moreover its proof is based on an ODE argument which only produces radially symmetric solutions. It is then natural to address the following question: For which values of $V$ and which polynomials $P$ does Problem \eqref{main-2}-\eqref{volume} have a solution with asymptotic behavior \eqref{uasym} (with $\alpha=-\frac{2V}{\vol(S^{2m})}$)? 
In analogy with Theorem \ref{thm_2} we will show:

\begin{thm}\label{thm_1}
For any integer $m\ge 2$, given $P\in \mathcal{P}_m$  and $V>0$, 
 there exists a solution of \eqref{main-2}-\eqref{volume} having the asymptotic behavior \eqref{uasym} for $\alpha=-\frac{2V}{\vol(S^{2m})}$.
\end{thm}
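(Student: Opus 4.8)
The plan is to construct the solution via an integral ansatz analogous to the one used by Wei--Ye and in the proof of Theorem~\ref{thm_2}, but now exploiting the negative sign of the nonlinearity in \eqref{main-2} to remove the volume restriction. Specifically, for a prescribed polynomial $P\in\mathcal{P}_m$ and a prescribed target volume $V>0$, I would look for $u$ of the form
\begin{equation*}
u(x)=\frac{1}{(2m-1)!\,|S^{2m}|}\int_{\rn}\log\frac{|y|}{|x-y|}\,(2m-1)!\,e^{2mu(y)}\,dy-P(x)+C,
\end{equation*}
i.e. $u=w-P+C$, where $w$ is the Riesz-type potential of $-e^{2mu}$ normalized so that $w(x)=\alpha\log|x|+o(\log|x|)$ with $\alpha=-\frac{2V}{\vol(S^{2m})}<0$, and $C\in\mathbb{R}$ is a normalization constant to be determined so that the volume comes out to exactly $V$. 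The first step is to set up the fixed point: plugging the ansatz into the volume constraint gives a relation that determines $C$ as a function of the unknown, and one rewrites the problem as $v=T(v)$ for $v:=u+P-C$ in a suitable weighted Banach space (functions decaying like a negative power controlled by the exponent $a$ from \eqref{poly2}), exactly as in \cite{W-Y} and in the proof of Theorem~\ref{thm_2}.

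The key structural point, and the reason $V$ is unrestricted here, is a sign/monotonicity gain: because the right-hand side of \eqref{main-2} is negative, the potential $w$ is (up to the logarithmic leading term) superharmonic-type with a favorable sign, so $e^{2mu}=e^{2m(w-P+C)}\le e^{2mC}e^{-2mP(x)}\cdot(\text{bounded factor})$, and the condition $P\in\mathcal{P}_m$ forces $e^{-2mP(x)}$ to decay at least like $e^{-c|x|^a}$, hence to be integrable with integral depending continuously and monotonically on $C$. Thus for every $V>0$ there is a unique $C=C(v)$ solving the volume equation, with good a priori bounds, and no upper bound on $V$ is needed — the contrast with the positive case (where $e^{2mu}$ can concentrate and the sphere volume is a genuine threshold) is precisely that here the nonlinearity is self-defeating rather than self-reinforcing. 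I would then verify the contraction estimates for $T$ on a ball of the weighted space, using the critical integral estimate (Lemma~\ref{a2m}, the Brézis--Merle-type bound alluded to in the excerpt) to control $\|w\|$ in terms of $\|e^{2mu}\|_{L^1}$ and the weighted norms, and Schauder/elliptic regularity to upgrade the fixed point to a classical solution in $C^\infty(\rn)$.

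The final steps are to check the asymptotics: once $u=w-P+C$ is obtained with $v=w-\alpha\log|x|$ in the weighted space, one shows $w(x)=\alpha\log|x|+C'+o(1)$ as $|x|\to\infty$ by splitting the potential integral over $\{|y|<|x|/2\}$, $\{|x-y|<|x|/2\}$ and the complement and using $e^{2mu}\in L^1$ with the decay just established; this gives exactly \eqref{uasym} with the stated $\alpha=-\frac{2V}{\vol(S^{2m})}$ and a possibly new additive constant absorbed into $C$. One must also confirm $\int e^{2mu}=V$, which is built into the construction via the choice of $C$, and that $P$ appearing in the expansion is the originally prescribed polynomial (the polynomial part of $u+\alpha\log|x|$ is $-P$ plus lower-order terms that the weighted norm kills). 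The main obstacle I expect is establishing the uniform a priori bounds needed to close the fixed point argument for arbitrary $V$: one must ensure that the weighted norm of $w$, and the constant $C(v)$, stay in a fixed bounded range as $v$ ranges over the ball, uniformly in the (now unbounded) data $V$ — this is where the negative sign must be used quantitatively, showing that enlarging $V$ only makes $\alpha$ more negative and $w$ decay faster, so the estimates do not degenerate, rather than invoking any smallness. Handling the low-order monomials of $P$ (those not controlled by the leading $|x|^a$ growth) and the case where $P$'s top-degree part does not split as a sum of even powers requires the Gorin-type lemma cited in the excerpt to guarantee the $e^{-c|x|^a}$ decay of $e^{-2mP}$ along every ray, and this is the delicate technical ingredient feeding into the integrability and continuity of $C\mapsto\int e^{2m(w-P+C)}$.
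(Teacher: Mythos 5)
Your overall skeleton (ansatz $u=-\alpha u_0-P+v+C$ with $C$ fixed by the volume constraint, decay of $e^{-2mP}$ via the Gorin-type inequality \eqref{poly2}, a fixed point in a weighted space, and the final splitting of the logarithmic potential to read off \eqref{uasym}) matches the paper's strategy, but the step that actually carries the theorem is missing. Your explanation of why $V$ is unrestricted --- that the negative sign makes the potential ``superharmonic-type with a favorable sign'', hence $e^{2mu}\le e^{2mC}e^{-2mP}\cdot(\text{bounded factor})$ --- is not a proof: for $m\ge 2$ there is no maximum principle for $(-\Delta)^m$ on $\rn$, so the sign of the right-hand side of \eqref{main-2} gives no pointwise upper bound on the potential part of $u$. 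That uniform upper bound is exactly the a priori estimate one must establish, and in the paper it comes from a completely different mechanism: the quantization/blow-up theorem for $(-\Delta)^m u_k=V_ke^{2mu_k}$ (Theorem \ref{useful}, from \cite{LM-quantization}), whose case $(ii)$ requires $V_0>0$ at any blow-up point, so that when $K\le 0$ blow-up is impossible and local uniform upper bounds hold with \emph{no} bound on $\int Ke^{2mu}$ (Theorem \ref{lemma_1}, alternative $(c_2)$). This, combined with the Pohozaev identity (Lemma \ref{pohozaev}) to make the tail $\int_{\rn\setminus B_R}|K|e^{2m\bar w}$ small --- the step where $P\in\mathcal{P}_m$ enters through \eqref{stimapo} --- and with the Kelvin transform plus the Br\'ezis--Merle-type Lemma \ref{a2m} on the exterior region, is what produces the uniform bounds. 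Your proposal replaces this machinery with a heuristic and therefore has a genuine gap at the central point.

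Two further concrete problems. First, you propose to close the argument by a \emph{contraction} on a ball of the weighted space; nothing makes $T$ a contraction for large $V$ (and it would yield uniqueness, which is not expected). The paper instead uses the Leray--Schauder/Schaefer theorem (Lemma \ref{leray}), which only needs a priori bounds on solutions of $tTv=v$, $t\in(0,1]$ --- i.e.\ precisely the estimates discussed above. Second, your plan to ``control $\|w\|$ in terms of $\|e^{2mu}\|_{L^1}$'' via Lemma \ref{a2m} cannot work globally for arbitrary $V$: that lemma requires $p<\gamma_m/\|f\|_{L^1}$, while here the total mass is $|\alpha|\gamma_m$, which is large exactly in the regime you want to allow; the paper applies Lemma \ref{a2m} only after the Kelvin transform, on a region where Lemma \ref{int<epsilon} has made the mass smaller than $\varepsilon$. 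Also note that for $\alpha<0$ the solution \emph{grows} like $|\alpha|\log|x|-P(x)$, so ``larger $V$ makes $w$ decay faster'' is not accurate; what saves integrability is only the $e^{-c|x|^a}$ decay of $e^{-2mP}$.
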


The remarkable fact which allows for large values of $V$ in Theorem \ref{thm_1} (but not in Theorem \ref{thm_2}) is that, as shown in \cite{LM-quantization}, when the $Q$-curvature is negative, compactness is obtained even for large volumes, compare Theorems \ref{useful} and \ref{lemma_1} below. This in turn depends on Theorem C above, and in particular on the fact that the polynomial in the expansion \eqref{uasym} of a solution to \eqref{main-2}-\eqref{volume} is necessarily non-constant. 

\medskip

About the assumption that $P\in \mathcal{P}_m$ in Theorems \ref{thm_2} and \ref{thm_1}, we do not claim nor believe that it is optimal, but it is technically convenient in the crucial Lemma \ref{int<epsilon} below, where it is needed in \eqref{stimapo}. Since a solution to \eqref{main-1}-\eqref{volume} or \eqref{main-2}-\eqref{volume} must satisfy $\eqref{uasym}$ for $\alpha=\pm \frac{2V}{\vol(S^{2m})}$, a necessary condition on $P$ and $V$ is
\begin{equation}\label{assp}
\int_{\rn\setminus B_1} e^{-2m(P(x)+\alpha\log|x|)}dx<\infty,
\end{equation}
but it is unknown whether this condition is also sufficient to guarantee the existence of a solution to \eqref{main-1}-\eqref{volume} or \eqref{main-2}-\eqref{volume} with asymptotic expansion \eqref{uasym}, at least in the negative case, or for $V<\vol(S^{2m})$ in the positive case.

Also replacing \eqref{poly} with the weaker assumption
\begin{equation}\label{poly3}
\lim_{|x|\to \infty} P(x)=\infty
\end{equation}
 (which implies the first inequality in \eqref{poly2}, hence \eqref{assp}) creates problems, since \eqref{poly3} does not imply \eqref{poly} when $\deg P \ge 4$, see e.g. Proposition \ref{proppoly} in the appendix, and as already noticed \eqref{poly} is crucial in Lemma \ref{int<epsilon} below.

\medskip

Finally, we remark that new difficulties arise when recasting the above problems in odd dimension. For instance in dimension $3$ T. Jin, A. Maalaoui, J. Xiong and the second author studied in  \cite{JMMX} the non-local problem
$$(-\Delta)^\frac32 u= 2e^{3u}\quad \text{in }\mathbb{R}^3,\quad V:=\int_{\mathbb{R}^3}e^{3u}dx<\infty,$$
proving the existence of some non-spherical solutions with asymptotic behavior as in \eqref{uasym}. Whether also in this case one can show an analog to Theorems \ref{thm_2} and \ref{thm_1} above is an open question.

\medskip


 \noindent\textbf{Notation} In the following $C$ will denote a generic positive constant, whose dependence will be specified when necessary, and whose value can change from line to line. We will also write
$$B_r(x):=\{y\in \rn:|y-x|<r\},\quad B_r:=B_r(0).$$

\section{Strategy of the proof of Theorems \ref{thm_2} and \ref{thm_1}}

Fix $u_0\in C^{\infty}(\rn)$ such that $u_0(x)=\log |x|$ for $|x|\geq 1$. Integration by parts yields
\[
\int_{\rn}(-\Delta)^mu_0dx=-\gamma_m,
\]
where $\gamma_m$ is defined by
\begin{equation}\label{gammam}
(-\Delta)^m\log\frac{1}{|x|}=\gamma_m\delta_0 \text{ in }\rn, \text{ i.e. }\gamma_m=\frac{(2m-1)!}{2}\vol(S^{2m}).
\end{equation}
Let $V$, $\alpha=\pm \frac{2V}{\mathrm{vol}(S^{2m})}$ and $P\in \mathcal{P}_m$ be given as in Theorem \ref{thm_2} or \ref{thm_1}. We would like to find a solution to \eqref{main-1} or \eqref{main-2} of the form
\begin{equation}\label{uexp}
u= -\alpha u_0-P+v+C,
\end{equation}
for a suitable choice of $C\in \mathbb{R}$ and of a smooth function $v(x)=o(1)$  as $|x|\to\infty.$
Define
\begin{equation}\label{defK}
K=\frac{\alpha\gamma_m}{V}e^{-2mP- 2m\alpha u_0}=\sign(\alpha) (2m-1)!e^{-2mP- 2m\alpha u_0},
\end{equation}
and notice that \eqref{poly} implies
\begin{equation}\label{decayK}
|K(x)|\le C_1 e^{-C_2|x|^a}
\end{equation}
for some $C_1, C_2>0$.

Now if we assume \eqref{volume}, then the constant $C$ in \eqref{uexp} is determined by the function $v$. Indeed \eqref{volume} implies
$$V=\int_{\rn}e^{2mu}dx=\frac{e^{2mC}}{(2m-1)!}\int_{\rn} |K| e^{2mv}dx,$$
hence
\begin{equation}\label{defcv}
C=c_v:=-\frac{1}{2m}\log\left(\frac{1}{(2m-1)! V}\int_{\rn}|K|e^{2mv} dx\right)=-\frac{1}{2m}\log\left(\frac{1}{\alpha\gamma_m}\int_{\rn}Ke^{2mv} dx\right).
\end{equation}
An easy computation shows that $u$ given by \eqref{uexp} satisfies
\[
 (-\Delta)^mu=\mathrm{sign}(\alpha)(2m-1)!e^{2mu}
\]
and \eqref{volume} if and only if $C=c_v$ and
\begin{equation}\label{defv}
 (-\Delta)^mv = Ke^{2m(v+c_v)}+\alpha(-\Delta)^mu_0.
\end{equation}
Then we will use a fixed point method in the spirit of \cite{W-Y} to find a solution $v$ to \eqref{defv} in the Banach space
$$C_0(\rn):=\left\{f\in C^0(\rn):\lim_{|x|\to\infty} f(x)=0\right\},\quad \|f\|_{C_0}:=\sup_{\rn}|f|,$$
and of course $v$ will also be smooth by elliptic estimates. In order to run the fixed-point argument we introduce the following weighted Sobolev spaces.

\begin{defn} For $k\in \mathbb{N}$, $\delta\in\mathbb{R}$ and $p\ge 1$ we set $M_{k,\delta}^p(\rn)$ to be the completion of $C_c^{\infty}(\rn)$ in the norm 
\[
\|f\|_{M_{k,\delta}^p} := \sum_{|\beta|\leq k}\|(1+|x|^2)^{\frac{(\delta+|\beta|)}{2}}D^{\beta}f\|_{L^p(\rn)}.
\]
We also set $L^p_{\delta}(\rn):= M_{0,\delta}^p(\rn)$. Finally we set 
$$\Gamma^p_\delta(\rn)  := \left\{f\in L^p_{2m+\delta}(\rn):\int_{\rn}f dx=0\right\},$$
whenever $\delta p>-2m$, so that $L^p_{2m+\delta}(\rn)\subset L^1(\rn)$ and the above integral is well defined.
\end{defn}

\begin{lem}\label{lemmacv}
Fix  $p\ge 1$ and $\delta>-\frac{2m}{p}$.  For $v\in C_0(\rn)$ and $c_v$  as in \eqref{defcv} we have 
$$S(v):=Ke^{2m(v+c_v)}+\alpha(-\Delta)^m u_0\in \Gamma^p_\delta(\rn),$$
and the map $S:C_0(\rn)\to \Gamma^p_\delta(\rn)$ is continuous.
\end{lem}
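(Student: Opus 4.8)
The plan is to verify the two assertions separately: that $S(v)$ lies in $\Gamma^p_\delta(\rn)$ for each fixed $v\in C_0(\rn)$, and that $S$ is continuous. For the first part, recall that $\Gamma^p_\delta(\rn)$ requires two things: membership in $L^p_{2m+\delta}(\rn)$, and vanishing integral. The integral condition is the easy half: by the very definition of $c_v$ in \eqref{defcv} we have $\int_{\rn}Ke^{2m(v+c_v)}dx=\alpha\gamma_m$, while $\int_{\rn}(-\Delta)^mu_0\,dx=-\gamma_m$ by \eqref{gammam}, so $\int_{\rn}S(v)\,dx=\alpha\gamma_m-\alpha\gamma_m=0$. (One should note $(-\Delta)^mu_0$ is smooth and compactly supported, since $u_0(x)=\log|x|$ for $|x|\ge 1$, so it trivially lies in every $L^p_{2m+\delta}$, and its integral is well-defined.) For the $L^p_{2m+\delta}$ bound on the term $Ke^{2m(v+c_v)}$, the point is that $|v|$ is bounded (it is in $C_0$, hence $\|v\|_{C_0}<\infty$) and $c_v$ is a finite constant, so $e^{2m(v+c_v)}\le e^{2m(\|v\|_{C_0}+|c_v|)}=:M$ pointwise; hence $|Ke^{2m(v+c_v)}|\le M|K|$, and by the exponential decay \eqref{decayK}, $|K(x)|\le C_1e^{-C_2|x|^a}$, the weight $(1+|x|^2)^{(2m+\delta)/2}|K(x)|$ decays faster than any polynomial grows and is therefore in $L^p(\rn)$. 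This gives $S(v)\in\Gamma^p_\delta(\rn)$.

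For continuity, take $v_n\to v$ in $C_0(\rn)$, i.e. $\|v_n-v\|_{C_0}\to 0$. First one checks $c_{v_n}\to c_v$: from \eqref{defcv}, $c_v$ depends on $v$ only through $\int_{\rn}|K|e^{2mv}dx$, and since $\big|e^{2mv_n}-e^{2mv}\big|\le 2m\,e^{2m\max(\|v_n\|_{C_0},\|v\|_{C_0})}|v_n-v|$ pointwise with the $|K|$-weighted integral finite by \eqref{decayK}, dominated convergence gives $\int|K|e^{2mv_n}\to\int|K|e^{2mv}$, and then continuity of $\log$ (the argument stays bounded away from $0$ and $\infty$) gives $c_{v_n}\to c_v$. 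Consequently $v_n+c_{v_n}\to v+c_v$ uniformly, so $e^{2m(v_n+c_{v_n})}\to e^{2m(v+c_v)}$ uniformly with a uniform bound $\sup_n e^{2m(v_n+c_{v_n})}=:M'<\infty$. Therefore
\[
\|S(v_n)-S(v)\|_{L^p_{2m+\delta}}=\big\|(1+|x|^2)^{\frac{2m+\delta}{2}}K\big(e^{2m(v_n+c_{v_n})}-e^{2m(v+c_v)}\big)\big\|_{L^p}\le \big\|e^{2m(v_n+c_{v_n})}-e^{2m(v+c_v)}\big\|_{C_0}\,\big\|(1+|x|^2)^{\frac{2m+\delta}{2}}K\big\|_{L^p},
\]
and the right-hand side tends to $0$ since the second factor is a fixed finite number (by \eqref{decayK}, as above). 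This proves continuity.

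The only mild subtlety — not really an obstacle — is ensuring that the weighted $L^p$ norm of $K$ is finite, which is where the exponential decay \eqref{decayK} (and hence the hypothesis $P\in\mathcal{P}_m$, through \eqref{poly2}) is genuinely used: polynomial weights are harmless against exponential decay, so $\big\|(1+|x|^2)^{s}K\big\|_{L^p}<\infty$ for every $s\in\mathbb{R}$ and every $p\ge 1$, in particular for $s=(2m+\delta)/2$. The hypothesis $\delta>-2m/p$ plays no role in this particular lemma beyond making $\Gamma^p_\delta(\rn)$ well-defined (so that the zero-integral constraint makes sense); it will matter later when inverting $(-\Delta)^m$ on this space. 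I would present the argument in the order above: first the integral identity, then the pointwise domination and weighted integrability for fixed $v$, then $c_{v_n}\to c_v$, and finally the displayed estimate for continuity.
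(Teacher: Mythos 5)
Your proposal is correct and follows exactly the route the paper intends: its proof of Lemma \ref{lemmacv} is just the one-line remark that everything follows from \eqref{decayK} and dominated convergence, and your argument (zero integral from the definition of $c_v$ and $\int_{\rn}(-\Delta)^m u_0\,dx=-\gamma_m$, weighted $L^p$ bound from the exponential decay of $K$, continuity via $c_{v_n}\to c_v$ and uniform convergence of the exponentials) is precisely the fleshed-out version of that. No gaps; your closing remarks on the roles of \eqref{decayK} and of $\delta>-2m/p$ are also accurate.
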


\begin{proof} This follows easily from \eqref{decayK} and dominated convergence.
\end{proof}

\begin{lem}[Theorem 5 in \cite{McOwen}]\label{isomorphism} 
For $1<p<\infty$ and $\delta\in \left(-\frac{2m}{p},-\frac{2m}{p}+1\right)$, the operator $(-\Delta)^m$ is an isomorphism from $M_{2m,\delta}^p(\rn)$ to $\Gamma^p_\delta(\rn)$. 
\end{lem}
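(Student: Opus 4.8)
As the lemma is McOwen's (Theorem 5 in \cite{McOwen}), I only sketch the structure of a self-contained proof. The plan is to reduce the claim to two facts about the operator $(-\Delta)^m\colon M^p_{2m,\delta}(\rn)\to L^p_{2m+\delta}(\rn)$, whose boundedness is immediate from the definitions since the derivatives of $f$ entering $(-\Delta)^m f$ have order $2m$ and carry the weight $(1+|x|^2)^{(2m+\delta)/2}$: that the operator is injective, and that its range is all of $\Gamma^p_\delta(\rn)$; the open mapping theorem (equivalently, the a priori estimate below) then upgrades the resulting continuous bijection to an isomorphism. That the range is contained in $\Gamma^p_\delta(\rn)$ is easy: $\delta>-\frac{2m}{p}$ gives, as noted in the definition above, $L^p_{2m+\delta}(\rn)\subset L^1(\rn)$, and $\int_{\rn}(-\Delta)^m f\,dx=0$ for $f\in C^\infty_c(\rn)$ by integration by parts, hence for every $f\in M^p_{2m,\delta}(\rn)$ by density; moreover $\Gamma^p_\delta(\rn)$ is a closed subspace of codimension one in $L^p_{2m+\delta}(\rn)$.

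Injectivity is elementary: if $(-\Delta)^m f=0$ with $f\in M^p_{2m,\delta}(\rn)$, then $f$ is a tempered distribution with $|\xi|^{2m}\hat f=0$, so $\hat f$ is supported at the origin and $f$ is a polynomial. But a nonzero polynomial of degree $d\ge0$ never lies in $L^p_\delta(\rn)$ when $\delta>-\frac{2m}{p}$: its leading homogeneous part is a nonzero homogeneous polynomial, so $\int_{\{|x|\ge R\}}(1+|x|^2)^{\delta p/2}|f|^p\,dx$ grows like $\int_R^\infty r^{(\delta+d)p+2m-1}\,dr$, which diverges because $(\delta+d)p\ge\delta p>-2m$. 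Hence $f\equiv0$.

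Surjectivity is the substantial point. I would obtain it from the scale-invariant a priori estimate
\[
\|f\|_{M^p_{2m,\delta}(\rn)}\le C\,\|(-\Delta)^m f\|_{L^p_{2m+\delta}(\rn)},\qquad f\in C^\infty_c(\rn),
\]
which already yields that $(-\Delta)^m$ is injective with closed range. One proves it by decomposing $\rn$ into the unit ball and the dyadic annuli $A_j=\{2^j\le|x|\le2^{j+1}\}$, $j\ge0$: on each $A_j$ one rescales to unit size, applies interior Calder\'on--Zygmund (Agmon--Douglis--Nirenberg) estimates for the constant-coefficient operator $(-\Delta)^m$, and sums against the weights, which are comparable to $2^{j(\delta+|\beta|)}$ on $A_j$ --- the powers of $2^j$ generated by rescaling the operator of order $2m$ matching those of the target weight of exponent $2m+\delta$ --- while the lower-order term on the unit ball is absorbed using the injectivity just proved and the compactness of $M^p_{2m,\delta}(\rn)\hookrightarrow L^p(B_2)$. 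Given closed range, it remains only to show that the cokernel is exactly one-dimensional; together with $\mathrm{range}\subseteq\Gamma^p_\delta(\rn)$ and $\mathrm{codim}\,\Gamma^p_\delta(\rn)=1$ this forces equality. This last point can be settled by duality --- the formal adjoint is again $(-\Delta)^m$ acting on a negatively weighted conjugate space, and for $\delta\in(-\frac{2m}{p},-\frac{2m}{p}+1)$ its kernel is spanned by the constant function $1$, whose pairing with $g\in L^p_{2m+\delta}(\rn)$ is $\int_{\rn}g\,dx$ --- or, more concretely, by writing down the explicit solution $f:=\Phi*g$, where by \eqref{gammam} $\Phi=\frac{1}{\gamma_m}\log\frac{1}{|x|}$ is the fundamental solution of $(-\Delta)^m$: here the cancellation $\int_{\rn}g\,dx=0$ kills the logarithmic growth of $\Phi*g$, after which $\Phi*g$ decays like $|x|^{-1}$, which lies in $L^p_\delta(\rn)$ precisely because $\delta<-\frac{2m}{p}+1$, and the weighted bounds on the derivatives of $\Phi*g$ up to order $2m$ come from (weighted) singular-integral and Hardy--Littlewood--Sobolev estimates.

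The crux of the whole argument --- where the two-sided restriction on $\delta$ is indispensable --- is the behavior at infinity. A global weighted Calder\'on--Zygmund estimate for the top-order derivatives is unavailable, because the power weight $|x|^{(2m+\delta)p}$ lies just outside the Muckenhoupt $A_p$ class exactly when $\delta>-\frac{2m}{p}$; the dyadic argument sidesteps this by using only the unweighted local estimate on each annulus, at the price of summing a series whose convergence is precisely what $-\frac{2m}{p}<\delta<-\frac{2m}{p}+1$ guarantees. That interval is the first ``gap'' between the exceptional weights of $(-\Delta)^m$ on $\rn$ (those with $\delta+\frac{2m}{p}\in\mathbb{Z}$), on which $(-\Delta)^m$ is Fredholm of constant index; confirming that on this window the cokernel is \emph{exactly} one-dimensional --- equivalently, that $\int_{\rn}g\,dx=0$ is the \emph{only} obstruction to solvability --- ultimately rests on the asymptotic expansion (in powers of $|x|$ and $\log|x|$) of solutions of $(-\Delta)^m f=g$ near infinity, and this is the real content behind the clean statement of the lemma.
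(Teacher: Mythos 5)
The paper offers no proof of this lemma: it is quoted as Theorem 5 of \cite{McOwen}, so there is no in-paper argument to compare yours with. Your sketch reconstructs what is essentially the standard proof behind McOwen's result (and its polyharmonic extension): boundedness and range $\subseteq\Gamma^p_\delta$ from the definitions and density; injectivity because a polyharmonic tempered distribution is a polynomial and no nonzero polynomial lies in $L^p_\delta$ for $\delta>-\frac{2m}{p}$; the scale-invariant a priori estimate via dyadic annuli, rescaling to unit size, interior Calder\'on--Zygmund estimates and summation of the weights, with the compact lower-order term removed by a contradiction argument; and finally identification of the cokernel with the constants on the non-exceptional interval $\left(-\frac{2m}{p},-\frac{2m}{p}+1\right)$. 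In outline this is correct, and since the lemma is invoked here for $(-\Delta)^m$ rather than for $\Delta$, a direct argument of this kind (or an iteration of McOwen's isomorphisms for $\Delta$ through intermediate weighted spaces) is indeed what stands behind the citation.

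One caveat on your ``more concrete'' route to surjectivity via $f=\Phi*g$ with $\Phi=\frac{1}{\gamma_m}\log\frac1{|x|}$ from \eqref{gammam}: the claim that the cancellation $\int_{\rn}g\,dx=0$ makes $\Phi*g$ decay like $|x|^{-1}$ overshoots. That decay requires a finite first moment $\int_{\rn}|y||g(y)|\,dy$, and by H\"older this is guaranteed only for $\delta>-\frac{2m}{p}+1$, i.e.\ strictly outside the interval at hand; for $\delta<-\frac{2m}{p}+1$ one can have $g\in L^p_{2m+\delta}$ with infinite first moment. What is actually true (and suffices) is the weighted estimate: splitting the kernel $\log\frac{|x|}{|x-y|}$ over the regions $|y|\le|x|/2$, $|x-y|\le|x|/2$ and their complement, the zero-mean condition yields decay of order $\delta+\frac{2m}{p}\in(0,1)$ in the averaged sense needed for $\Phi*g\in L^p_\delta$, not pointwise $|x|^{-1}$ decay. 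Since you also propose the duality argument (kernel of the adjoint on the conjugate weighted space reduced to constants), which is clean and complete at this level of detail, this is a repairable overclaim rather than a fatal gap, but as written that sentence is not correct.
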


The following Lemma will be proven in Section \ref{proofcompact} below.

\begin{lem}\label{compact}
 For $\delta>-\frac{2m}{p}$, $p\ge 1$, the embedding  $E:M_{2m,\delta}^p(\rn)\hookrightarrow C_0(\rn)$ is compact.
\end{lem}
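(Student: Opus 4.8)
The plan is to combine the classical compact Sobolev embedding on balls with a scaling (dyadic‑annulus) argument that produces \emph{uniform} decay of the tails, so that no mass can escape to infinity along a bounded sequence.

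First I would set up the scaling estimate, which also yields the continuous inclusion $M_{2m,\delta}^p(\rn)\hookrightarrow C_0(\rn)$ along the way. Write $n=2m$, decompose $\rn=B_1\cup\bigcup_{j\ge0}A_j$ with the disjoint dyadic annuli $A_j:=\{2^j\le|x|<2^{j+1}\}$, and for $f\in C_c^\infty(\rn)$ set $f_j(y):=f(2^jy)$ on the fixed annulus $A:=\{1\le|y|\le 2\}$. Changing variables, and using that $(1+|x|^2)^{1/2}$ is comparable to $2^j$ on $A_j$ uniformly in $j$, one gets
\[
\|f_j\|_{W^{2m,p}(A)}^p\le C\,2^{-j(n+p\delta)}\,I_j,\qquad I_j:=\sum_{|\beta|\le 2m}\int_{A_j}(1+|x|^2)^{\frac{p(\delta+|\beta|)}{2}}|D^\beta f|^p\,dx,
\]
with $C$ independent of $j$ and $f$; since the $A_j$ are disjoint and $p\ge1$ one has $\sum_{j\ge0}I_j\le\|f\|_{M_{2m,\delta}^p}^p$. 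Applying the Sobolev embedding $W^{2m,p}(A)\hookrightarrow C^0(\bar A)$ on the fixed domain $A$ (whose constant does not see $j$) and noting $\|f_j\|_{C^0(\bar A)}=\sup_{A_j}|f|$, we obtain, with $\sigma:=\tfrac{n}{p}+\delta>0$ (here the hypothesis $\delta>-\tfrac{2m}{p}$ enters),
\[
\sup_{A_j}|f|\le C\,2^{-j\sigma}I_j^{1/p}\qquad(j\ge0).
\]
Combined with $\sup_{B_1}|f|\le C\|f\|_{W^{2m,p}(B_2)}\le C\|f\|_{M_{2m,\delta}^p}$, this gives $\|f\|_{C_0}\le C\|f\|_{M_{2m,\delta}^p}$, and since $I_j\to0$ (tail of a convergent series) also $\sup_{A_j}|f|\to0$, so $f\in C_0(\rn)$; by density $E$ is a well‑defined bounded linear map.

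For compactness, take $(f_k)$ bounded in $M_{2m,\delta}^p(\rn)$, say $\|f_k\|_{M_{2m,\delta}^p}\le1$. The displayed estimate immediately gives the uniform tail bound $\sup_{|x|\ge 2^J}|f_k|\le C\,2^{-J\sigma}$ for every $k$ and $J$. On each ball $B_R$ the sequence $(f_k|_{B_R})$ is bounded in $W^{2m,p}(B_R)$, and the embedding $W^{2m,p}(B_R)\hookrightarrow C^0(\bar B_R)$ is compact by Rellich--Kondrachov. Choosing $R=R_i\to\infty$, extracting nested subsequences and diagonalizing, I get a subsequence $(f_{k_l})$ converging uniformly on every compact subset of $\rn$ to some continuous $f$. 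The uniform tail bound then upgrades this to convergence in $C_0(\rn)$: given $\varepsilon>0$, pick $J$ with $C2^{-J\sigma}<\varepsilon/2$ (which bounds both $\sup_{|x|\ge 2^J}|f_{k_l}|$ and, in the limit, $\sup_{|x|\ge 2^J}|f|$), then $\sup_{\bar B_{2^J}}|f_{k_l}-f|<\varepsilon/2$ for $l$ large. Hence $E$ is compact.

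The routine ingredients — the change of variables, the comparison $(1+|x|^2)^{1/2}\asymp2^j$ on $A_j$, the scale invariance of the Sobolev constant on the fixed annulus, the elementary inequality $\sum_ia_i^p\le(\sum_ia_i)^p$, and the diagonal extraction — I would not dwell on. The step that actually carries the proof is the scaling bound $\sup_{A_j}|f|\le C2^{-j\sigma}I_j^{1/p}$ with exponent $\sigma>0$: it converts the weighted integrability at infinity encoded in the $M_{2m,\delta}^p$ norm into honest equi‑decay of the functions themselves, which is precisely what rules out concentration of mass near infinity for a bounded sequence. The local compactness is classical.
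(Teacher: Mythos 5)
Your proof is correct and follows essentially the same route as the paper's: rescaling annuli to a fixed reference annulus to exploit the scale--invariant Sobolev constant, obtaining the decay factor $R^{-\gamma}$ with $\gamma=\frac{2m}{p}+\delta>0$, and then combining local Rellich--Kondrachov compactness with a diagonal extraction and the uniform tail decay to upgrade locally uniform convergence to convergence in $C_0(\rn)$. The only cosmetic difference is your use of dyadic annuli and the localized quantity $I_j^{1/p}$ in place of the paper's continuously parameterized annuli $A_R$ and the full norm $\|f\|_{M^p_{2m,\delta}}$, which changes nothing essential.
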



Fix $p\in (1,\infty)$ and $\delta\in \left( -\frac{2m}{p}, -\frac{2m}{p}+1  \right)$. 
Then by Lemma \ref{lemmacv}, Lemma \ref{isomorphism} and Lemma \ref{compact}, one can define a compact map
\begin{equation}\label{defT}
T:=E\circ ((-\Delta)^m)^{-1}\circ S: C_0(\rn)\to C_0(\rn)
\end{equation}
given by
$Tv  =\bar{v}$ where $\bar v$ is the only solution to 
$$  (-\Delta)^m\bar{v} =Ke^{2m(v+c_v)} + \alpha (-\Delta)^mu_0,$$
and compactness follows from the continuity of $S$ and $((-\Delta)^m)^{-1}$ and the compactness of $E$.

If $v$ is a fixed point of $T$, then it solves \eqref{defv} and $u=v+c_v-P-\alpha u_0$ is a solution of \eqref{main-1} or
 \eqref{main-2} (depending on the sign of $K$ in \eqref{defK}) and \eqref{volume}, with asymptotic expansion \eqref{uasym}. Then in order to prove 
Theorems \ref{thm_2} and \ref{thm_1} it remains to prove that $T$ has a fixed point, 
and we shall do that using the following fixed-point theorem.

\begin{lem}[Theorem 11.3 in \cite{G-T}]\label{leray}
 Let $T$ be a compact mapping of a Banach space $X$ into itself, and suppose that there exists a constant $M$ such that
 $$\|x\|_{X}<M$$
 for all $x\in X$ and $t\in (0,1]$ satisfying $tTx=x$. Then $T$ has a fixed point.
\end{lem}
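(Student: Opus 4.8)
The statement is the classical Leray--Schauder (also called Schaefer) fixed point theorem, so the plan is to deduce it from Schauder's fixed point theorem together with a radial retraction trick, without invoking topological degree. Fix a radius $M'>M$ and set $\bar B:=\{x\in X:\|x\|_X\le M'\}$, a closed, bounded and convex subset of $X$. Define the radial retraction $r:X\to\bar B$ by $r(x)=x$ when $\|x\|_X\le M'$ and $r(x)=M'x/\|x\|_X$ when $\|x\|_X>M'$; this map is continuous, with $\|r(x)\|_X=\min\{\|x\|_X,M'\}$.

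The first step is to observe that $g:=r\circ T:\bar B\to\bar B$ is a compact continuous self-map. Continuity is immediate from the continuity of $T$ and $r$; and since $T$ is compact, $T(\bar B)$ is relatively compact, hence so is $g(\bar B)=r(T(\bar B))$, being the image of a relatively compact set under the continuous map $r$. Schauder's fixed point theorem (a compact continuous self-map of a closed bounded convex subset of a Banach space has a fixed point) then produces $x_0\in\bar B$ with $r(Tx_0)=x_0$.

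The second step is the case distinction that makes the a priori bound bite. If $\|Tx_0\|_X\le M'$, then $r$ is the identity near $Tx_0$, so $x_0=r(Tx_0)=Tx_0$, i.e.\ $x_0$ is a fixed point of $T$ and we are done. Otherwise $\|Tx_0\|_X>M'$, and then $x_0=r(Tx_0)=t\,Tx_0$ with $t:=M'/\|Tx_0\|_X\in(0,1)$, while also $\|x_0\|_X=\|r(Tx_0)\|_X=M'$. But $x_0$ now satisfies $tTx_0=x_0$ with $t\in(0,1]$, so the hypothesis of the lemma forces $\|x_0\|_X<M$, contradicting $\|x_0\|_X=M'>M$. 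Hence the second case cannot occur, and $T$ has a fixed point.

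There is no deep obstacle here: the only nontrivial input is Schauder's theorem itself, which we take as known, and everything else is bookkeeping. The one point to handle with a little care is the continuity of the retraction $r$ across the sphere $\|x\|_X=M'$ and the inheritance of compactness by $g=r\circ T$ from $T$, but both are routine. (Alternatively one can run the argument through the Leray--Schauder degree, as in Gilbarg--Trudinger: homotopy invariance along $x\mapsto x-tTx$, $t\in[0,1]$, together with the a priori bound keeping the solution set off $\partial B_{M'}$, gives $\deg(I-T,B_{M'},0)=\deg(I,B_{M'},0)=1$ and hence a zero of $x-Tx$; the retraction proof above is simply a self-contained repackaging of this.)
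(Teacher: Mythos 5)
Your proof is correct. The paper itself gives no proof of this lemma, citing it as Theorem 11.3 of Gilbarg--Trudinger, and your argument (radial retraction onto a ball of radius $M'>M$ plus Schauder's theorem, with the a priori bound ruling out the boundary case $x_0=tTx_0$, $t\in(0,1)$, $\|x_0\|_X=M'$) is precisely the standard proof given there, so nothing further is needed.
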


In order to apply Lemma \ref{leray} to the operator $T$ defined in \eqref{defT} we will  prove in Section \ref{bounds} the following a priori bound, which completes the proof of Theorems \ref{thm_2} and \ref{thm_1}.

\begin{prop}\label{for-deg-cond} For any $0< t\leq 1$ and $v\in C_0(\rn)$ such $tTv=v$ we have
\begin{align}
 \|v\|_{C_0(\rn)}\leq M,
\end{align}
with $M$ independent of $v$ and $t$.
\end{prop}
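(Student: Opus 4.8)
The plan is to argue by contradiction: suppose there exist sequences $t_k\in(0,1]$ and $v_k\in C_0(\rn)$ with $t_kTv_k=v_k$ but $\|v_k\|_{C_0}\to\infty$. Writing $u_k:=v_k+c_{v_k}-P-\alpha u_0$, each $u_k$ solves a rescaled version of \eqref{main-1} or \eqref{main-2}, namely $(-\Delta)^m u_k = t_k\sign(\alpha)(2m-1)!e^{2mu_k}$ together with the volume normalization $\int_{\rn}e^{2mu_k}\,dx = t_k V$ (this last identity should follow from the definition of $c_{v_k}$ in \eqref{defcv} and the relation $t_kTv_k=v_k$, which forces $(-\Delta)^mv_k=t_k(Ke^{2m(v_k+c_{v_k})}+\alpha(-\Delta)^mu_0)$). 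The first task is to reinterpret the fixed-point equation $t_kTv_k=v_k$ as a genuine PDE for $u_k$ with a uniformly bounded right-hand side in $L^1$, so that the blow-up machinery (Theorem \ref{useful} and the compactness statement Theorem \ref{lemma_1}) becomes applicable. One must be a little careful since $t_k$ enters as a factor: in the positive case the effective volume $t_kV$ stays in $(0,\vol(S^{2m}))$, which is exactly the hypothesis under which \ref{lemma_1} gives compactness, while in the negative case no upper bound on volume is needed, so the same argument covers Theorem \ref{thm_1}.

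Next I would extract uniform bounds. From $v_k=t_kTv_k$ and Lemma \ref{lemmacv}, \ref{isomorphism}, the function $v_k$ lies in $M_{2m,\delta}^p(\rn)$ with $(-\Delta)^mv_k=S_k$ where $S_k$ is controlled in $\Gamma^p_\delta$ once we control $c_{v_k}$ and the integral $\int Ke^{2m(v_k+c_{v_k})}\,dx$. The heart of the matter is therefore a uniform bound on $c_{v_k}$, equivalently on $\int_{\rn}|K|e^{2mv_k}\,dx$, and a uniform bound on $\|v_k\|_{C_0}$. I would obtain these from the compactness result Theorem \ref{lemma_1}: the sequence $u_k$ has uniformly bounded volume (bounded by $V$ in the positive case, and the negative case is unconditional), so up to subsequence it either converges in $C^{2m-1}_{loc}$ to a solution $u_\infty$ of the corresponding limit problem, or blows up; the content of \ref{lemma_1} is precisely that blow-up is excluded under the stated volume restriction (using that in the negative case the limiting polynomial is non-constant, per Theorem C). Convergence of $u_k$ then forces convergence of $v_k=u_k+P+\alpha u_0-c_{v_k}$ modulo constants, and the normalization $\lim_{|x|\to\infty}v_k(x)=0$ together with the asymptotic expansion \eqref{uasym} pins down $c_{v_k}\to c_\infty$, yielding $\|v_k\|_{C_0}\le M$, a contradiction.

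The step I expect to be the main obstacle is connecting the local compactness/blow-up theorem to the global behavior needed here, i.e. upgrading $C^{2m-1}_{loc}$ control of $u_k$ to a uniform $C_0$ bound on $v_k$ on all of $\rn$. Near infinity one needs the decay \eqref{decayK} of $K$ and the weighted-space estimates (Lemma \ref{isomorphism}, \ref{compact}) to show that $v_k$ is small outside a large ball uniformly in $k$; the danger is that the constant $c_{v_k}$ could run to $\pm\infty$, decoupling the interior and exterior estimates. Ruling this out requires the identity $\int_{\rn}|K|e^{2mv_k}\,dx = (2m-1)!\,V\,e^{-2mc_{v_k}}$ to be used in both directions: an a priori lower bound on this integral (from the interior where $v_k$ is controlled after subtracting its blow-up limit) prevents $c_{v_k}\to+\infty$, and an a priori upper bound (from \eqref{decayK} and a uniform-in-$k$ local $L^\infty$ bound on $v_k$) prevents $c_{v_k}\to-\infty$. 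Threading these estimates together, using that the volume stays subcritical in the positive case, is the delicate part; once $c_{v_k}$ is trapped in a compact interval, the remaining bounds follow from the elliptic regularity packaged in Lemmas \ref{isomorphism} and \ref{compact}.
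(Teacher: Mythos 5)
Your outline reduces everything to the local blow-up machinery, but the heart of the paper's proof is precisely the part you leave as "the delicate step": a uniform upper bound for $\bar w=v+c_v+\frac{\log t}{2m}$ \emph{outside} compact sets. Local compactness (Theorem \ref{lemma_1}) only gives $\sup_{B_R}\bar w\le C(R)$; it says nothing about peaks of $v_k+c_{v_k}$ escaping to infinity, and your proposed fix is circular: the weighted-space estimate $\|v\|_{M^p_{2m,\delta}}\le C\|e^{2m\bar w}\|_{L^\infty}+C$ (via Lemma \ref{isomorphism}) is exactly what requires the missing global sup bound on $e^{2m\bar w}$, so it cannot be used to produce it, and trapping $c_{v_k}$ in a compact interval does not control $v_k$ near infinity uniformly in $k$ (membership in $C_0$ gives decay for each fixed $k$ only). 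In the paper this exterior bound is obtained by a completely different mechanism that your proposal never touches: a Pohozaev-type identity (Lemma \ref{pohozaev}, exploited in Lemma \ref{int<epsilon}) which uses the hypothesis $P\in\mathcal{P}_m$ through \eqref{stimapo} to make $\int_{\rn\setminus B_{R_0}}|K|e^{2m\bar w}\,dx$ uniformly small, followed by a Kelvin transform and the Br\'ezis--Merle-type estimate (Lemma \ref{a2m}) to convert that smallness into an $L^\infty$ bound on $\bar w$ in $\rn\setminus B_{R_0}$ (Lemma \ref{complement}). That the assumption $P\in\mathcal{P}_m$ never appears in your argument is a clear sign this ingredient is missing; without it the proof does not close.

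Two further points. First, your reformulation is not quite right: with $u_k=v_k+c_{v_k}-P-\alpha u_0$ one gets $(-\Delta)^m u_k=t_k\sign(\alpha)(2m-1)!e^{2mu_k}+(t_k-1)\alpha(-\Delta)^m u_0$, not the clean rescaled equation you state, and $\int_{\rn}e^{2mu_k}dx=V$ (not $t_kV$) by the very definition \eqref{defcv}; the paper avoids this by working with $\bar w$ and $w=\bar w-t\alpha u_0$, for which the curvature $Q=Ke^{2mt\alpha u_0}$ is the correct one and the mass picks up the factor $t$. Second, to invoke Theorem \ref{useful}/\ref{lemma_1} at all you must verify the hypothesis $\int|\Delta u_k|\,dx\le C$ on balls; in the paper this comes from the integral representation formula (Lemma \ref{lemmav}), another step absent from your sketch. (Minor: in the negative case blow-up is excluded simply because concentration points require $V_0>0$, i.e. $K\le 0$ rules out $S\neq\emptyset$ directly; Theorem C is not what is used inside Theorem \ref{lemma_1}.)
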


\section{A priori estimates and proof of  Proposition \ref{for-deg-cond}}\label{bounds}
 Throughout this section let $t\in (0,1]$ and $v\in C_0(\rn)$ be fixed and satisfy $tTv=v$, that is
\[
 (-\Delta)^mv=t(Ke^{2m(v+c_v)}+\alpha(-\Delta)^mu_0),
\]
where $c_v$ is as in \eqref{defcv}.
Also define 
\begin{align}\label{expression_barw}
 \bar{w}:=v+c_v+\frac{\log t}{2m}.
\end{align}


\begin{lem}\label{lemmav} We have
\begin{equation}\label{expression_v}
 v(x)=-\frac{t}{\gamma_m}\int_{\rn}\log(|x-y|)K(y)e^{2m(v(y)+c_v)}dy+t\alpha u_0(x).
\end{equation}
\end{lem}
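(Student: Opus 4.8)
The plan is to derive the representation formula by combining the PDE satisfied by $v$ with the known fundamental solution of $(-\Delta)^m$ encoded in \eqref{gammam}, and then to pin down the ambiguity (a polynomial) using the fact that $v\in C_0(\rn)$, i.e. $v(x)\to 0$ as $|x|\to\infty$. Concretely, set
\[
h(x):=-\frac{t}{\gamma_m}\int_{\rn}\log(|x-y|)K(y)e^{2m(v(y)+c_v)}dy+t\alpha u_0(x),
\]
which is well defined because \eqref{decayK} guarantees that $K e^{2m(v+c_v)}\in L^1(\rn)$ (recall $v$ is bounded), and because the logarithmic kernel is locally integrable with at most logarithmic growth at infinity. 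Then I would show $(-\Delta)^m h = t(Ke^{2m(v+c_v)}+\alpha(-\Delta)^m u_0)$ in the distributional sense: the Newtonian-type potential against the kernel $-\frac{1}{\gamma_m}\log|x-y|$ inverts $(-\Delta)^m$ by \eqref{gammam}, and $t\alpha u_0$ contributes exactly the term $t\alpha(-\Delta)^m u_0$. Since $v$ satisfies the same equation, the difference $w:=v-h$ is polyharmonic, $(-\Delta)^m w=0$ on $\rn$, hence $w$ is a polynomial.

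Next I would estimate the growth of $h$ at infinity to conclude that the polynomial $w$ is bounded, hence constant, and in fact zero. The term $t\alpha u_0(x)=t\alpha\log|x|$ for $|x|\ge 1$ is $O(\log|x|)$. For the potential term, split the integral over $B_{|x|/2}(x)$ and its complement: on the complement one uses $\log|x-y|\le \log(|x|+|y|)\le \log|x| + C + \log(1+|y|)$ together with the fast decay \eqref{decayK} of $K$ to bound the contribution by $C(1+\log|x|)$; near the singularity one uses that $\log|x-y|$ is integrable on a ball of fixed radius while $Ke^{2m(v+c_v)}$ is bounded there. Thus $h(x)=O(\log|x|)$, so $w=v-h$ is a polynomial that is $O(\log|x|)$, forcing $w$ to be constant. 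To see the constant is $0$, note $v(x)\to 0$, so it remains to check that the constant part of $h$ is $0$; but by construction $h$ contains no additive constant, and one can verify $h$ has no constant limit obstruction by examining $\lim_{|x|\to\infty}(h(x)-t\alpha\log|x|)$ — more cleanly, one simply observes that if $w\equiv c_0$ then evaluating the asymptotics of $v=h+c_0$ and comparing with $v\to 0$ and the logarithmic growth of $h$ forces $c_0=0$ only if $h$ itself has no nonzero constant asymptotic part.

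The cleanest way to kill the constant, and the step I expect to be the main (though minor) obstacle, is therefore to argue directly rather than through asymptotics: instead of matching constants at infinity, I would note that both $v$ and $h$ lie in (or can be shown to satisfy the defining growth condition of) the space $M^p_{2m,\delta}(\rn)$, respectively its natural analog for the potential, on which $(-\Delta)^m$ is injective by Lemma \ref{isomorphism}; since $v=Tv$-type solutions are constructed precisely as $((-\Delta)^m)^{-1}S(v)$ in this space, and the logarithmic potential $h-t\alpha u_0$ is the canonical representative of that inverse, uniqueness in $\Gamma^p_\delta$ forces $v - t\alpha u_0 = h - t\alpha u_0$, i.e. $v=h$. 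Alternatively, and most elementarily, one invokes that a polynomial which tends to $0$ at infinity along with the subtraction of an $O(\log|x|)$ function must vanish identically once one checks the potential term genuinely decays: in fact $\int \log(|x-y|/|x|) K(y)e^{2m(v+c_v)}dy\to 0$ by dominated convergence, so $h(x)-t\alpha\log|x| \to -\frac{t}{\gamma_m}\log\!\big(\text{const}\big)$-type finite limit, and one checks this limit equals the constant forced by $v\to0$; equality of the two representations then follows. I would present the argument via the injectivity route, citing Lemma \ref{isomorphism}, as it is the shortest rigorous path and avoids delicate constant-chasing.
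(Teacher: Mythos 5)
Your skeleton (define the potential $h$, check that it solves the same equation, so that $w=v-h$ is polyharmonic, then exploit behavior at infinity) is the same as the paper's, but the way you close the argument has genuine gaps. The route you say you would actually present --- injectivity of $(-\Delta)^m$ on the weighted spaces via Lemma \ref{isomorphism} --- is circular: to invoke it you must first know that $h$, equivalently the logarithmic potential of $S(v)$, belongs to $M_{2m,\delta}^p(\rn)$, i.e.\ that it really is ``the canonical representative of the inverse'', and that is exactly what is in question. The growth bound $h=O(\log|x|)$, or even boundedness with a nonzero limit, is not enough for membership: since $\delta>-\frac{2m}{p}$, constants already fail to lie in $L^p_\delta(\rn)$, so membership of $h$ would require genuine decay of $h$ and of its weighted derivatives --- precisely the estimate you are trying to avoid. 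The paper instead proves directly that the right-hand side tends to $0$ at infinity: for $|x|\ge 1$ it uses the normalization $\int_{\rn}Ke^{2m(v+c_v)}dy=\alpha\gamma_m$, built into the definition \eqref{defcv} of $c_v$, to write $h(x)=\frac{t}{\gamma_m}\int_{\rn}Ke^{2m(v+c_v)}\log\frac{|x|}{|x-y|}\,dy$, and then shows this vanishes at infinity by a five-region splitting. Your ``elementary'' fallback is essentially this, but it is mis-stated: $h(x)-t\alpha\log|x|$ does \emph{not} have a finite limit (it behaves like $-t\alpha\log|x|$); the correct statement, after the exact cancellation of logarithms coming from \eqref{defcv}, is $h(x)\to 0$. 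Moreover the claim that $\int_{\rn}\log(|x-y|/|x|)Ke^{2m(v+c_v)}dy\to 0$ ``by dominated convergence'' is not a one-liner: there is no $x$-independent integrable majorant, because the kernel blows up near $y=x$, a region escaping the bound $|\log(|x-y|/|x|)|\le C+\log(1+|y|)$; one must split off a neighborhood of $y=x$ and the annulus $|x-y|\sim |x|$ and use the decay \eqref{decayK} of $K$ there, which is the actual content of the paper's proof.

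A secondary point: ``$(-\Delta)^m w=0$ on $\rn$, hence $w$ is a polynomial'' is false without a growth hypothesis (e.g.\ $e^{x_1}\cos x_2$ is harmonic, hence polyharmonic, and is not a polynomial). You must first establish the growth control on $w$ (which you do, but only afterwards) and then apply the Liouville theorem for polyharmonic functions; the paper does this after proving $h\to 0$, so that $w\to 0$ and the resulting polynomial vanishes identically. With the order fixed and the decay of $h$ proved as above, your fallback can be repaired into the paper's proof; as written, the key analytic step is missing and your preferred weighted-space shortcut does not close.
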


\begin{proof}
Let $\tilde{v}(x)$ be defined as the right-hand side of \eqref{expression_v}.
Then  for $|x|\geq 1$, using \eqref{defcv} we write 
\[
 \tilde{v}(x)=\frac{t}{\gamma_m}\int_{\rn}K(y)e^{2m(v(y)+c_v)}(\log|x|-\log|x-y|)dy
\]
We first show that
\begin{equation}\label{limv}
\lim_{|x|\to\infty}\tilde{v}(x)=0.
\end{equation}
Let $R>1$ be fixed. Then for $|x|>2R$, we split
$$\tilde{v}(x)=\sum_{i=1}^5 I_i,\quad I_i:= \frac{t}{\gamma_m}\int_{A_i}K(y)e^{2m(v(y)+c_v)}\log \left(\frac{|x|}{|x-y|}\right)dy,$$
where
\begin{equation*}
\begin{split}
A_1&:=B_R(0)\\
A_2&:=B_1(x)\\
A_3&:=B_{|x|/2}(x)\setminus B_1(x)\\
A_4&:=(B_{2|x|}(x)\setminus B_{|x|/2}(x))\setminus B_R(0)\\
A_5&:= \rn \setminus B_{2|x|}(x),
\end{split}
\end{equation*}
and we will show that $I_i\to 0$ as $|x|\to\infty$ for $1\le i\le 5$.

For $i=1$, since $\lim_{|x|\to\infty}\log \left(\frac{|x|}{|x-y|}\right)= 0$  uniformly with respect to $y\in B_R(0)$, from the
dominated convergence theorem we get
\[
 |I_1| \leq C\int_{B_R(0)}|K(y)|\left|\log\left(\frac{|x|}{|x-y|}\right)\right|dy\to 0 \quad \text{as }|x|\to\infty.
\]
From \eqref{decayK} we also have
\begin{align}
 |I_2| &\leq C\int_{B_1(x)}|K(y)|\left(\log|x|+|\log|x-y||\right)dy \notag\\
  & \leq C\|K\|_{L^{\infty}(B_1(x))}\left(\log|x|+\|\log|\cdot|\|_{L^1(B_1(0))} \right)\notag\\
 &\rightarrow 0, \quad \text{as } |x|\to\infty. \notag
\end{align}
Since \eqref{decayK} yields $K\log(|\cdot|)\in L^1(\rn)$, we infer with the dominated convergence theorem
\begin{align}
 |I_3|&\leq C\int_{\{1\le|x-y|<|x|/2\}}|K(y)|\left(\log|x|+\log(|x|/2)\right)dy \notag\\
 &\leq C\int_{\{1\le |x-y|<|x|/2\}}|K(y)|\left(\log|2y|+\log(|y|)\right)dy\notag\\
 & \rightarrow 0,\quad  \text{as }|x|\to\infty.\notag
\end{align}
Using that $\frac{1}{2}<\frac{|x|}{|x-y|}<2$ on $A_4$ and that $K\in L^1(\rn)$ we find that for every
$\varepsilon>0$ it is possible to choose $R$ so large that
$$|I_4|\leq C\int_{A_4}|K(y)|\left|\log\left(\frac{|x|}{|x-y|}\right)\right|dy \leq C\int_{A_4}|K|dy\le C \int_{\rn\setminus B_R(0)} 
|K|dy\le\varepsilon.$$
Finally, again using that $K\log(|\cdot|)\in L^1(\rn)$ with the dominated convergence theorem we get
\begin{align}
 |I_5|&\leq C\int_{\{|x-y|>2|x|\}}|K(y)|(\log|x|+\log|x-y|)dy \notag\\
 &\leq C\int_{\{|x-y|>2|x|\}}|K(y)|(\log|y|+\log|2y|)dy  \notag\\
  & \rightarrow 0, ~~\text{as } |x|\to\infty.\notag
\end{align}
Since $\varepsilon$ can be chosen arbitrarily small,  \eqref{limv} is proven. Since $v\in C_0(\rn)$, and $\Delta^m \tilde v=\Delta^m v$, 
the difference $w:=v-\tilde v$ satisfies
\[
\Delta^mw=0 \quad \text{in }\rn, \qquad \lim_{|x|\to\infty} w(x)=0.
\]
Then by the Liouville theorem for polyharmonic functions (see e.g. Theorem 5 in 
\cite{LM-classification}) $w$ is a polynomial, and since it vanishes at infinity, it must be identically zero, i.e. $v\equiv\tilde{v}$. 
\end{proof}

By Lemma \ref{isomorphism} and \eqref{decayK}, we have
 
\begin{align}
\frac{1}{C} \|v\|_{M_{2m,\delta}^p}&\le  \|(-\Delta)^mv\|_{L^p_{2m+\delta}} \notag\\
&= \|Ke^{2m\bar{w}}+t\alpha(-\Delta)^mu_0\|_{L^p_{2m+\delta}} \label{estimatev} \notag\\
                       & \leq \|K\|_{L^p_{2m+\delta}} \|e^{2m\bar{w}}\|_{L^\infty}   + \alpha\|(-\Delta)^mu_0\|_{L^p_{2m+\delta}} \notag\\
& \le C  \|e^{2m\bar{w}}\|_{L^\infty} + C,\notag
\end{align}
with $C$ independent of $t$ and $v$,
and together with Lemma \ref{local} and Lemma \ref{complement} below  we obtain
$$\|v\|_{M_{2m,\delta}^p}\le C,$$
where $C$ is independent of $v$ and $t$. Now Proposition \ref{for-deg-cond} follows at once from the continuity of the embedding
$M_{2m,\delta}^p(\rn)\hookrightarrow C_0(\rn)$ (see Lemma \ref{compact}).\\

\noindent\textbf{Remark.}  An alternative way of getting uniform bounds on $\|v\|_{C_0}$ is to get uniform upper bounds of $\bar{w}$ and use them in \eqref{expression_v}.

\medskip

Using Lemma \ref{lemmav} one can prove the following decay estimate for the derivatives of $v$ at infinity.

\begin{lem}\label{estimates_infty} 
For $1\le \ell\le 2m-1$ we have
$$   \lim_{|x|\to\infty} |x|^\ell\nabla^\ell v(x)=\lim_{|x|\to\infty} |x|^\ell\nabla^\ell \bar w(x)=0.$$
\end{lem}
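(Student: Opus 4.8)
\textbf{Proof plan for Lemma \ref{estimates_infty}.}
The plan is to differentiate the integral representation \eqref{expression_v} under the integral sign and estimate the resulting kernel. Since $\bar w$ differs from $v$ only by the constant $c_v+\frac{\log t}{2m}$, it suffices to treat $v$. For $|x|\ge 2$ write
\[
\nabla^\ell v(x)=-\frac{t}{\gamma_m}\int_{\rn}\nabla^\ell_x\log(|x-y|)K(y)e^{2m(v(y)+c_v)}dy+t\alpha\nabla^\ell u_0(x),
\]
noting that $\nabla^\ell u_0(x)=\nabla^\ell\log|x|$ for $|x|\ge 1$, which already satisfies $|x|^\ell\nabla^\ell u_0(x)\to 0$ for $\ell\ge 1$. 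The key pointwise bound is that $|\nabla^\ell_x\log|x-y||\le C_\ell|x-y|^{-\ell}$ for $\ell\ge 1$, so the task reduces to showing
\[
|x|^\ell\int_{\rn}\frac{|K(y)|e^{2m(v(y)+c_v)}}{|x-y|^\ell}dy\to 0\quad\text{as }|x|\to\infty.
\]
Here one uses that $e^{2m(v+c_v)}$ is bounded (this is exactly the a priori bound on $\|v\|_{C_0}$ already established above, which gives $\|v\|_{M^p_{2m,\delta}}\le C$ and hence uniform boundedness of $v$), and that $|K(y)|\le C_1e^{-C_2|y|^a}$ by \eqref{decayK}, so the measure $|K(y)|e^{2m(v(y)+c_v)}dy$ is a fixed finite measure with super-polynomial decay.

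To get the limit I would split the integral over the same type of regions as in the proof of Lemma \ref{lemmav}: on $B_{|x|/2}(0)$ one has $|x-y|\ge |x|/2$, so that piece is bounded by $C|x|^\ell\cdot(2/|x|)^\ell\int_{B_{|x|/2}}|K|e^{2m(v+c_v)}dy\le C\int_{B_{|x|/2}}|K|e^{2m(v+c_v)}dy$, which can be made small because the integrand has finite total mass and we may instead integrate over $\rn\setminus B_R$ for $R$ large (splitting off $B_R$ first and using that on $B_R$ the factor $|x|^\ell/|x-y|^\ell\to 1$); on the region $|y|\ge |x|/2$ the decay \eqref{decayK} gives $|K(y)|e^{2m(v(y)+c_v)}\le C_1 e^{-C_2(|x|/2)^a}$ up to the bounded factor $e^{2mc_v}$, and $|x|^\ell\int_{\{|x-y|\le 1\}}|x-y|^{-\ell}dy$ is bounded by $C|x|^\ell$ when $\ell<2m$, which is killed by the exponential; the intermediate annulus $\{1\le |x-y|,\ |y|\ge|x|/2\}$ is handled by combining $|x-y|^{-\ell}\le 1$ with the exponential decay of $K$ there. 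Summing the pieces and letting first $|x|\to\infty$ and then $R\to\infty$ gives the claim.

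The main obstacle is bookkeeping rather than conceptual: one must make sure that the weight $|x|^\ell$ multiplying the singular kernel $|x-y|^{-\ell}$ is always absorbed, which is why the restriction $\ell\le 2m-1$ appears — it guarantees $|x-y|^{-\ell}$ is locally integrable near $y=x$ (indeed $\ell<2m$) so that the near-diagonal contribution $\int_{B_1(x)}|x-y|^{-\ell}dy$ is finite (equal to $C$ independent of $x$ when $\ell<2m$), and there the exponential smallness of $K(y)$ for $|y|$ comparable to $|x|$ does the rest. One should also verify that differentiation under the integral sign is justified, which follows from the dominated-convergence bounds just described together with the smoothness of $\log|x-y|$ away from $y=x$ and the integrability of $|K|$ near the diagonal (again using $\ell<2m$). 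Finally, the factor $e^{2mc_v}$ is uniformly bounded above and below in $t,v$ because $c_v$ is given by \eqref{defcv} and $v$ is uniformly bounded; this makes all constants independent of $t$ and $v$, though for the qualitative limit statement of the lemma only the fixed $v$ matters.
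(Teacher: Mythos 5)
Your reduction contains a genuine gap, and it is exactly the point where the paper's proof does something you skipped. You claim that $|x|^{\ell}\nabla^{\ell}u_0(x)=|x|^{\ell}\nabla^{\ell}\log|x|\to 0$ for $\ell\ge 1$; this is false, since $\nabla^{\ell}\log|x|$ is homogeneous of degree $-\ell$ (e.g.\ $|x|\,|\nabla\log|x||\equiv 1$), so this term only stays bounded. Correspondingly, the quantity you reduce the lemma to, namely $|x|^{\ell}\int_{\rn}|K(y)|e^{2m(v(y)+c_v)}|x-y|^{-\ell}dy\to 0$, is also false: for $y$ in a fixed ball $B_R$ one has $|x|^{\ell}/|x-y|^{\ell}\to 1$, so this piece converges to a quantity comparable to $\int_{B_R}|K|e^{2m(v+c_v)}dy>0$, a fixed positive constant, not something small. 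Taking absolute values of the kernel destroys the cancellation between the convolution term and the $t\alpha\nabla^{\ell}u_0$ term, and without that cancellation both contributions are $O(1)$ after multiplication by $|x|^{\ell}$, not $o(1)$.

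The paper's proof handles precisely this: using \eqref{defcv}, which gives $\frac{t}{\gamma_m}\int_{\rn}Ke^{2m(v+c_v)}dy=t\alpha$, one rewrites $t\alpha\nabla^{\ell}\log|x|$ as an integral against the same measure and obtains
\begin{equation*}
\nabla^{\ell}v(x)=\frac{1}{\gamma_m}\int_{\rn}K(y)e^{2m\bar w(y)}\bigl(\nabla^{\ell}\log|x|-\nabla^{\ell}\log|x-y|\bigr)dy,
\end{equation*}
and then for $y$ in a fixed ball the mean value theorem gives $|\nabla^{\ell}\log|x|-\nabla^{\ell}\log|x-y||\le C|y|/|x|^{\ell+1}$, so the near-origin contribution is $O(1/|x|)$ after weighting by $|x|^{\ell}$; the remaining regions ($|x-y|\le|x|/2$, where $K$ is super-polynomially small, and the far region where $|x|^{\ell}/|x-y|^{\ell}\le 2^{\ell}$ and the tail mass of $|K|e^{2m\bar w}$ is small) are treated essentially as you propose. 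So your splitting of the far and intermediate regions is fine, but the lemma cannot be proven without incorporating the cancellation coming from the exact value of $c_v$; you should restructure the argument around the displayed identity rather than around pointwise bounds on $|\nabla^{\ell}\log|x-y||$ alone. (A minor additional remark: the uniform a priori bound on $\|v\|_{C_0}$ is not available at this stage of the paper --- it is what the whole section is proving --- but for the qualitative limit only boundedness of the fixed $v\in C_0(\rn)$ is needed, as you note.)
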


\begin{proof} Notice that $\nabla v=\nabla\bar w$, so it is enough to work with $v$.

Using \eqref{expression_v} for $|x|>1$ one can compute 
$$ \nabla^\ell v(x)=\frac{1}{\gamma_m}\int_{\rn}K(y)e^{2m\bar{w}(y)}\left(\nabla^\ell \log(|x|)-\nabla^\ell\log(|x-y|)\right)dy.$$
Fix $\varepsilon>0$ and $R_1>1$ such that
 $$  \int_{\rn\setminus B_{R_1}}|K|e^{2m\bar{w}} dy<\varepsilon.$$
 For $|x|>2R_1$, we split $\rn$ in to three disjoint domains:
$$A_1:=B_{R_1}(0),\quad A_2:=B_{|x|/2}(x),\quad A_3:=\rn\setminus(A_1\cup A_2).$$
Then  
  $$ |x|^\ell \nabla^\ell v(x)=\frac{1}{\gamma_m}\sum_{i=1}^3 I_i, 
  \quad I_i:=|x|^\ell\int_{A_i}K(y)e^{2m\bar{w}(y)}\left(\nabla^\ell \log(|x|)-\nabla^\ell\log(|x-y|)\right)dy. $$ 
Since $R_1$ is fixed, for $|x|$ large enough we have by the mean-value theorem 
$$\left|\nabla^\ell \log(|x|)-\nabla^\ell\log(|x-y|)\right|\le|y| \sup_{B_{|y|}(x)}\left|\nabla^{\ell+1}\log(|z|)\right|\le \frac{C}{|x|^{\ell+1}} \quad \text{for } y\in A_1,$$
hence with \eqref{defcv} we get
 \begin{align}
  |I_1|\leq \frac{C}{|x|}\int_{A_1}|K|e^{2m\bar{w}}dy \leq \frac{C}{|x|}|\alpha|\gamma_m\rightarrow 0, \quad \text{as } |x|\to\infty.
  \notag
 \end{align}
Since $K$ goes to zero rapidly at infinity, $\bar w$ is bounded, and  $|x-y|\le |x|/2$ on $A_2$, we have
 \begin{align}
  |I_2|&\leq C\|K\|_{L^{\infty}(A_2)} \|e^{2m\bar{w}}\|_{L^{\infty}}|x|^\ell\int_{A_2}\left(\frac{1}{|x|^\ell}+\frac{1}{|x-y|^\ell}\right)dy \notag\\
   &\leq C \|K\|_{L^{\infty}(A_2)}\|e^{2m\bar{w}}\|_{L^{\infty}}|x|^{2m}\notag\\
   & \to 0, \qquad \text{as } |x|\to\infty. \notag
 \end{align}
 On $A_3$ we have $|x-y|\geq |x|/2$, which implies $\frac{|x|^\ell}{|x-y|^\ell}\leq 2^\ell$. Hence  
 $$ |I_3|\leq C(1+2^\ell)\int_{A_3}|K|e^{2m\bar{w}} dy< C\varepsilon.$$
Since $\varepsilon$ is arbitrarily small, the proof is complete.
\end{proof}

\begin{lem}\label{local}
The function $\bar{w}$ given by \eqref{expression_barw} is locally uniformly upper bounded, i.e. for every $R>0$ there exists $C=C(R)$ such that $\bar w\le C$ in $B_R$.
\end{lem}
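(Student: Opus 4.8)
The plan is to use the integral representation \eqref{expression_v} together with the rapid decay \eqref{decayK} of $K$ and a Jensen/convexity-type argument to bound $\bar w$ from above on each ball $B_R$. Since $\bar w = v + c_v + \frac{\log t}{2m}$ and $t\le 1$, it suffices to bound $v + c_v$ from above on $B_R$; and since $c_v = -\frac{1}{2m}\log\left(\frac{1}{\alpha\gamma_m}\int_{\rn} Ke^{2mv}\,dy\right)$, the quantity $v+c_v$ is invariant under adding constants to $v$, so I may as well work directly with the normalized density $h(y):=|K(y)|e^{2m(v(y)+c_v)}$, which by \eqref{defcv} satisfies $\int_{\rn} h\,dy = |\alpha|\gamma_m$. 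Rewriting \eqref{expression_v}, for $x\in B_R$,
$$
v(x)+c_v = -\frac{t}{\gamma_m}\int_{\rn}\log|x-y|\,h(y)\,dy \;+\; t\alpha u_0(x) + c_v(1 - t\cdot\tfrac{1}{\gamma_m}\textstyle\int h\,dy\,/\,|\alpha|),
$$
so the real content is an upper bound for $-\int_{\rn}\log|x-y|\,h(y)\,dy$ uniform for $x\in B_R$, given that $h\ge 0$, $\int h = |\alpha|\gamma_m$, and $h$ decays like \eqref{decayK}.

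The key steps, in order: (1) split $\rn = B_{2R}(0) \cup (\rn\setminus B_{2R}(0))$. On the far region $|x-y|\ge |y|/2 \ge R$, so $-\log|x-y|\le \log 2 - \log|y|$ is bounded above, and $\int_{\rn\setminus B_{2R}} (\log 2 - \log|y|)\,h(y)\,dy$ is finite using \eqref{decayK}; this contributes a constant depending only on $R$. (2) On $B_{2R}(0)$, write $-\log|x-y| \le \log_-|x-y| := \max(-\log|x-y|,0)$ plus a constant, and bound $\int_{B_{2R}(0)} \log_-|x-y|\,h(y)\,dy$. Here I use that $h = |K|e^{2m(v+c_v)}$ with $v\in C_0(\rn)$, hence $v$ is bounded, say $\|v\|_{C_0}=:L$; and $K$ is bounded on $B_{2R}$. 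The issue is that $c_v$ need not be bounded a priori — but $\int h = |\alpha|\gamma_m$ forces control: since $|K|\le C(R)$ on $B_{2R}$ and $e^{2m(v+c_v)} = h/|K|$, and $\int_{B_{2R}} h \le |\alpha|\gamma_m$, I get $\int_{B_{2R}} h\,\log_-|x-y|\,dy \le C(R)\int_{B_{2R}} e^{2m(v+c_v)}\log_-|x-y|\,dy$. So it reduces to: given $e^{2m(v+c_v)}$ with $L^1(B_{2R})$-norm $\le |\alpha|\gamma_m/\min_{B_{2R}}|K|$ and with $v+c_v$ bounded above by $L+c_v$, bound the integral against the logarithmic singularity. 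Since $\log_-|x-y|\in L^q(B_{2R})$ for all $q<\infty$, Hölder against $e^{2m(v+c_v)}\in L^\infty$... no, $c_v$ isn't yet controlled. Better: use that $\log_-|x-y|\in L^1(B_{4R})$ uniformly in $x\in B_R$ with a bound depending only on $R$, and that $e^{2m(v+c_v)}\le e^{2mL}e^{2mc_v}$ pointwise, so the integral is $\le e^{2mL}e^{2mc_v}\cdot C(R)$. Then $v(x)+c_v \le C(R) + C e^{2mc_v}$, and one must finally rule out $c_v\to+\infty$: but $e^{2mc_v}\cdot\int_{\rn}|K|e^{2mv}/(\alpha\gamma_m\cdot e^{2mc_v})$... actually $c_v$ being large means $\int|K|e^{2mv}$ is small, which combined with $v$ bounded below is not automatic.

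The cleanest route, which I expect to be the intended one, avoids this circularity: apply Jensen's inequality to the probability measure $d\mu = h\,dy/(|\alpha|\gamma_m)$. Then $e^{-\frac{1}{\gamma_m}\int\log|x-y|\,h\,dy} = \left(e^{-|\alpha|\int \log|x-y|\,d\mu}\right) \le \int |x-y|^{-|\alpha|}\,d\mu$ — wait, the sign and the power must be tracked, and $|x-y|^{-|\alpha|}$ is only integrable near $y=x$ if $|\alpha|<2m$, i.e. $V<m\,\vol(S^{2m})$, which is implied by $V<\vol(S^{2m})$ but not in the negative case where $V$ is arbitrary. So in the negative case one must instead use that $\alpha<0$, making $-\frac{t\alpha}{\gamma_m}\int\log|x-y|\,h$ have a favorable sign on $B_{2R}$ only for $|x-y|\le 1$... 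This sign/integrability bookkeeping is the main obstacle. I expect the resolution is: in both cases use Jensen with exponent $t|\alpha|$ (and note $t|\alpha|$ can be taken $<2m$ after possibly invoking a separate uniform bound on $c_v$ from Lemma \ref{complement}/\ref{int<epsilon}, or in the negative case exploiting $\alpha<0$ directly so no singularity arises at all), obtaining $e^{2m\bar w(x)} \le C\int_{\rn}|x-y|^{-t|\alpha|}\,d\mu(y)\cdot(\text{stuff})$, then integrating in $x$ over $B_R$ and using Fubini with $\int_{B_R}|x-y|^{-t|\alpha|}\,dx \le C(R)$ to get $\int_{B_R} e^{2m\bar w}\,dx \le C(R)$, from which the pointwise upper bound $\bar w \le C(R')$ on $B_R$ for $R<R'$ follows by the representation formula and elliptic estimates (or by a second application of the mean-value/Jensen argument on small balls). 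The careful separation of the two sign cases of $\bar Q$ and the verification that the exponent in Jensen stays subcritical is where the work lies.
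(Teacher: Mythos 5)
There is a genuine gap, in fact two. First, the Jensen bookkeeping in your ``cleanest route'' does not close. Since the quantity to exponentiate is $2m\bar w$, applying Jensen to the probability measure $d\mu=|K|e^{2m\bar w}\,dy/(t|\alpha|\gamma_m)$ produces the exponent $2mt|\alpha|$ on $|x-y|$, not $t|\alpha|$; in the positive case the Fubini step $\int_{B_R}|x-y|^{-2mt\alpha}\,dx\le C(R)$ then requires $t\alpha<1$, i.e.\ $V<\tfrac12\vol(S^{2m})$, which is not assumed in Theorem \ref{thm_2}. In the negative case the near singularity indeed has the favorable sign, but the far-field part $\frac{1}{\gamma_m}\int_{\rn\setminus B_{2R}}\log|x-y|\,|K|e^{2m\bar w}\,dy$ needs an upper bound on the logarithmic tail moment $\int|K|e^{2m\bar w}\log_+|y|\,dy$, which is \emph{not} controlled by the total mass $t|\alpha|\gamma_m$ alone (mass could sit at large $|y|$ if $c_v$ is large and $v$ is very negative near the origin, which is exactly what we have not yet excluded). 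Your suggestion to obtain such control, or a bound on $c_v$, from Lemma \ref{int<epsilon} or Lemma \ref{complement} is circular: those results come later and their proofs use Lemma \ref{local} (and Lemma \ref{local-derivatives}, which itself rests on Lemma \ref{local}).

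Second, and more fundamentally, the final step ``$\int_{B_R}e^{2m\bar w}\,dx\le C(R)$ implies $\sup_{B_{R/2}}\bar w\le C$ by the representation formula and elliptic estimates'' is not a valid implication: a uniform volume bound does not prevent blow-up (spherical solutions concentrate at a point with fixed volume), so no linear elliptic estimate or repeated mean-value/Jensen argument on small balls can yield the pointwise bound, because the local mass near a would-be concentration point cannot be made small uniformly. Note also that the volume bound itself is immediate from the normalization \eqref{defcv} together with the fact that $|K|$ is continuous and strictly positive, hence bounded below on $\overline{B_R}$; no Jensen is needed there. The actual content of Lemma \ref{local} is the passage from the bounds $\int_{B_R}e^{2mw}\,dx\le C(R)$ and $\int_{B_R}|\Delta w|\,dx\le C(R)$ (the latter obtained from \eqref{expression_v} and Fubini) to a local upper bound, and the paper does this by invoking Theorem \ref{lemma_1}, i.e.\ the blow-up/quantization Theorem \ref{useful}: concentration would force the curvature mass to reach $\Lambda_1=(2m-1)!\vol(S^{2m})$ at a point where the curvature is positive, which is excluded either because $\int_{B_R}Ke^{2m\bar w}\,dx\le t\alpha\gamma_m=(2m-1)!\,tV<\Lambda_1$ (positive case, where $V<\vol(S^{2m})$ enters), or because $K\le 0$ (negative case, which is why $V$ is unrestricted there). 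Without this compactness ingredient your argument cannot be completed.
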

\begin{proof}
 Since $u_0$ is a fixed function and locally bounded, it is enough to prove that $w:=\bar{w}-t\alpha u_0$
 is locally uniformly upper bounded. 
 Now 
 \begin{align}
 (-\Delta)^mw  =tKe^{2m(v+c_v)}= Qe^{2mw}, \notag
\end{align}
where $Q=Ke^{2mt\alpha u_0}$.\\
We bound
\begin{align}
 \int_{B_R}e^{2mw}dx  =t\int_{B_R}e^{2m(v+c_v)-2mt\alpha u_0} dx \leq C(R)\int_{B_R}|K|e^{2m(v+c_v)}dx
                    \leq C(R)|\alpha|\gamma_m, \notag
\end{align}
where we used \eqref{defcv}
and that $|K|$ is positive and continuous.

In addition in the case when $Q>0$ we have
\begin{equation*}
\int_{B_R}Qe^{2mw}dx\leq \int_{B_R}Ke^{2m(v+c_v)}dx < \alpha\gamma_m<(2m-1)!|S^{2m}|.
\end{equation*}
Moreover Lemma \ref{lemmav} gives 
\[
 \Delta w(x)=-\frac{t}{\gamma_m}\int_{\rn}\frac{2m-2}{|x-y|^2}K(y)e^{2m(v(y)+c_v)}dy
\]
and with Fubini's theorem we get
\begin{align}
 \int_{B_R}|\Delta w(x)|dx & =\frac{t}{\gamma_m}(2m-2)\int_{\rn}|K(y)|e^{2m(v(y)+c_v)}
                   \left(\int_{B_R}\frac{dx}{|x-y|^2}\right)dy \notag\\
   & \leq C\int_{\rn}|K(y)|e^{2m(v(y)+c_v)}\left(\int_{B_R(y)}
                          \frac{dx}{|x-y|^2}\right)dy \notag\\
     &\leq CR^{2m-2}. \notag
\end{align}
Therefore  Theorem \ref{lemma_1} implies that there exists $C=C(R)>0$
(independent of $w$) such that
\[
 \sup_{B_{R/2}}w\leq C.
\]
\end{proof}

A consequence of the local uniform upper bounds of $\bar{w}$ is the following local uniform bound for the derivatives of $v$:
 
\begin{lem}\label{local-derivatives}
 For every $R>0$ there exists a conastant $C=C(R)>0$ independent of $v$ and $t$ such that for $1\le \ell\le 2m-1$ we have
 \begin{equation*}
  \sup_{B_R}|\nabla^\ell v| \leq C.
 \end{equation*}

 \end{lem}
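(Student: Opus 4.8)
The goal is a local uniform (in $v$ and $t$) bound on the derivatives $\nabla^\ell v$ in $B_R$ for $1\le \ell\le 2m-1$. The natural approach is to use the integral representation of Lemma \ref{lemmav} together with the local uniform upper bound on $\bar w$ from Lemma \ref{local}. First I would fix $R>0$ and differentiate \eqref{expression_v}: for $x\in B_R$ and $1\le \ell\le 2m-1$,
\begin{equation*}
\nabla^\ell v(x)=-\frac{t}{\gamma_m}\int_{\rn}\nabla_x^\ell\log(|x-y|)\,K(y)e^{2m(v(y)+c_v)}dy+t\alpha\,\nabla^\ell u_0(x).
\end{equation*}
The last term is harmless since $u_0$ is a fixed smooth function and $|t\alpha|$ is bounded, so it remains to estimate the integral.

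\textbf{Key steps.} I would split $\rn$ into a bounded region $B_{2R}$ (where the singularity of the kernel lives but $K e^{2mv}$ has a controlled $L^1$-mass in the relevant part) and its complement. On $\rn\setminus B_{2R}$, for $x\in B_R$ one has $|x-y|\ge |y|/2\ge R$, so $|\nabla_x^\ell\log(|x-y|)|\le C|x-y|^{-\ell}\le C R^{-\ell}$ is bounded, and the integral is controlled by $\int_{\rn}|K|e^{2m(v+c_v)}dy=|\alpha|\gamma_m$ via \eqref{defcv}. On $B_{2R}$ I would use the upper bound $\bar w\le C(R)$ from Lemma \ref{local}, which gives $|K(y)|e^{2m(v(y)+c_v)} = t^{-1}|K(y)|e^{2m\bar w(y)}\le C(R)$ pointwise on $B_{2R}$ since $K$ is bounded there and $t\le 1$ — wait, the factor $t^{-1}$ is a problem for small $t$, so instead I would keep the $t$ in front: $t\,|K(y)|e^{2m(v(y)+c_v)} = |K(y)|e^{2m\bar w(y)}\le C(R)$ on $B_{2R}$. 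Thus the contribution from $B_{2R}$ is bounded by $C(R)\int_{B_{2R}}|\nabla_x^\ell\log(|x-y|)|\,dy\le C(R)\int_{B_{3R}}|z|^{-\ell}dz$, which is finite precisely because $\ell\le 2m-1<2m$. Combining the two regions gives the claim.

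\textbf{Main obstacle.} The only subtlety is the uniformity in $t$: the representation \eqref{expression_v} naturally carries the weight $Ke^{2m(v+c_v)}$, whereas the upper bound of Lemma \ref{local} is phrased for $\bar w=v+c_v+\frac{\log t}{2m}$, i.e. for $Ke^{2m\bar w}=t\,Ke^{2m(v+c_v)}$. One must therefore be careful to pair the $t$-prefactor in \eqref{expression_v} with $e^{2m(v+c_v)}$ so that the bounded quantity $e^{2m\bar w}$ appears, rather than dividing by $t$. Once this bookkeeping is done correctly, the rest is the elementary integrable-singularity estimate above, using $\ell<2m$; I expect no further difficulty.
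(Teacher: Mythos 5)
Your proposal is correct and follows essentially the same route as the paper: differentiate the representation formula of Lemma \ref{lemmav}, split into $B_{2R}$ and its complement, bound the near part using the local upper bound on $\bar w$ from Lemma \ref{local} together with the integrability of $|x-y|^{-\ell}$ for $\ell\le 2m-1$, and bound the far part by the total mass $\int_{\rn}|K|e^{2m\bar w}dy\le|\alpha|\gamma_m$ from \eqref{defcv}. Your care with the $t$-bookkeeping (pairing the prefactor $t$ with $e^{2m(v+c_v)}$ to form $e^{2m\bar w}$) is exactly how the paper handles it as well.
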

\begin{proof}
 Let $x\in B_R$. Then from \eqref{expression_v} and Lemma \ref{local}, we have
 \begin{align}
  |\nabla^\ell (v-t\alpha u_0)|& \le C\int_{\rn}|K(y)|e^{2m\bar{w}(y)}\frac{1}{|x-y|^{\ell}}dy\notag\\
       & \le C\|K\|_{L^{\infty}}\|e^{2m\bar{w}}\|_{L^{\infty}(B_{2R})}\int_{B_{2R}}\frac{1}{|x-y|^{\ell}}dy +
              \frac{C}{R^{\ell}}\int_{\rn\setminus B_{2R}}|K|e^{2m\bar{w}}dy   \notag\\
        & \leq C(R),\notag      
 \end{align}
where the last integral is bounded using \eqref{defcv}.
Since $u_0$ is smooth, $\alpha$ is fixed and $t\in (0,1]$, then the lemma follows.
\end{proof}

Now to prove uniform upper bounds for $\bar{w}$ outside a fixed compact set, first we will need the following result, which relies on a Pohozaev-type identity.
\begin{lem}\label{int<epsilon}
 For given $\varepsilon>0$, there exists $R_0=R_0(\varepsilon)>0$ only depending on $K$ (and not on $v$ or $t$) such that 
\[
   \int_{\rn\setminus B_{R_0}}|K|e^{2m\bar{w}}dx<\varepsilon.
\]
\end{lem}

 \begin{proof}
 Taking $R\to\infty$ in Lemma \ref{pohozaev} and noticing that the first term on the right-hand side of \eqref{eqpoho1} vanishes thanks to \eqref{decayK} and last two terms  vanish thanks to Lemma \ref{estimates_infty}, we find
 \begin{equation}\label{eqxnabla}
   \int_{\rn}(x\cdotp\nabla K)e^{2m\bar{w}}dx + 2m\int_{\rn}Ke^{2m\bar{w}}dx 
   -2mt\alpha\int_{B_1}(x\cdotp\nabla  v)(-\Delta)^mu_0dx   = 0.
 \end{equation}
Thanks to \eqref{poly2} we can find $C_1>0$ and $R_1\ge 1$ such that 
\begin{equation}\label{stimapo}
 x\cdotp\nabla |K(x)|=-2m\left(x\cdot \nabla P(x)+\alpha\right)|K(x)| \leq -\frac{1}{C_1}|x|^a|K(x)|\quad \text{for }|x|\ge R_1.
\end{equation}
Then for some $R\ge R_1$ to be fixed later we bound
\begin{equation}\label{last}
\begin{split}
\frac{1}{C_1} R^a\int_{\rn\setminus B_R}|K|e^{2m\bar{w}}dx& \le \frac{1}{C_1} \int_{\rn\setminus B_R}|x|^a|K(x)|e^{2m\bar{w}}dx\\
&\le- \int_{\rn\setminus B_R}x\cdot \nabla |K(x)|e^{2m\bar{w}}dx\\
& = 2m\int_{\rn}|K|e^{2m\bar{w}}dx +
 \int_{B_R}(x\cdotp\nabla |K(x)|)e^{2m\bar{w}}dx  \\
 & \quad -2mt|\alpha|\int_{B_1}(x\cdotp\nabla  v(x))(-\Delta)^mu_0dx\\
&=:(I) + (II) +(III),
\end{split}
\end{equation}
where in the equality on the third line we used \eqref{eqxnabla}.
Now using \eqref{defcv} and \eqref{expression_barw}, we compute $(I)=2mt|\alpha|\gamma_m$,
and using  Lemma \ref{local-derivatives} we bound
\begin{equation*}
\begin{split}
 (I)+(II)+(III)& \leq C_1 + \int_{B_ R}(x\cdotp\nabla |K(x)|)e^{2m\bar{w}}dx\\
 & \leq C_1+\int_{\Omega}(x\cdotp\nabla |K(x)|)e^{2m\bar{w}}dx
\end{split}
\end{equation*}
where
$$\Omega :=\left\{x\in\rn : x\cdot \nabla P(x)+\alpha<0\right\}.$$
From \eqref{stimapo} we infer that $\Omega\subset B_{R_1}$.
Then with Lemma \ref{local} we find
$$(I)+(II)+(III)\le C_1+\sup_{x\in B_{R_1}}(|x\cdot\nabla K(x)|)\int_{B_{R_1}}e^{2m\bar{w}}dx\le C_2=C_2(R_1), $$
where $C_2$ does not depend on $t$ or $v$.  To complete the proof
it suffices to take $R_0=R$ so large that
$$\frac{R^a}{C_1}\ge \frac{C_2}{\varepsilon}.$$
\end{proof}
 
 To prove uniform upper bound of $\bar{w}$ on the complement of a compact set, we use the Kelvin transform. For $R>1$ define
   \begin{align}\label{defn-xi}
\xi_R(x):=\bar{w}\left(\frac{Rx}{|x|^2}\right),\quad  0<|x|\leq 1.   
  \end{align}

 \begin{lem}\label{complement}
There exists $\varepsilon>0$ sufficiently small such that if $R_0=R_0(\varepsilon)>1$ is as in Lemma \ref{int<epsilon}, then $\xi(x):=\xi_{R_0}(x)$ is uniformly upper bounded on $B_1$, i.e. $\bar w$ is uniformly upper bounded in $\rn\setminus B_{R_0}$.
 \end{lem}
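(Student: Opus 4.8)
The plan is to invert at infinity, reducing the statement to a small‑mass estimate on a ball which can be handled by a Br\'ezis--Merle type argument. By the conformal covariance of $(-\Delta)^m$ in dimension $2m$ (or by a direct computation), setting $\phi(x):=R_0x/|x|^2$ the function $\xi:=\xi_{R_0}=\bar w\circ\phi$ satisfies
\[
(-\Delta)^m\xi=\widehat K\,e^{2m\xi}\quad\text{in }B_1\setminus\{0\},\qquad \widehat K(x):=R_0^{2m}|x|^{-4m}K(\phi(x)),
\]
and by \eqref{decayK} the function $\widehat K$ extends to an element of $C^\infty(B_1)$ vanishing to infinite order at $0$, with $\|\widehat K\|_{C^k(B_1)}\le C(\varepsilon)$ for every $k$; the change of variables $y=\phi(x)$ and Lemma \ref{int<epsilon} give $\int_{B_1}|\widehat K|e^{2m\xi}\,dx=\int_{\rn\setminus B_{R_0}}|K|e^{2m\bar w}\,dy<\varepsilon$. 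The first point to settle is that the singularity of $\xi$ at $0$ is removable: since $v\in C_0(\rn)$, $\bar w$ tends at infinity to the constant $c_v+\tfrac1{2m}\log t$, so $\xi$ extends continuously to $B_1$; moreover Lemma \ref{estimates_infty} and the chain rule give $\nabla^\ell\xi(x)=o(|x|^{-\ell})$ for $1\le\ell\le 2m-1$, whence $\xi\in W^{2m-1,p}(B_1)$ for $1<p<\tfrac{2m}{2m-1}$. Consequently $(-\Delta)^m\xi-\widehat K e^{2m\xi}\in W^{-1,p}(B_1)$ is a distribution supported in $\{0\}$, hence of the form $c\,\delta_0$; writing $\xi=\tfrac{c}{\gamma_m}\log\tfrac1{|x|}+\zeta$ with $\zeta$ then continuous at $0$, boundedness of $\xi$ forces $c=0$, so $(-\Delta)^m\xi=\widehat K e^{2m\xi}$ holds in all of $B_1$.

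Next I would split $\xi=\psi+h$ on $B_{1/2}$, where $\psi$ solves $(-\Delta)^m\psi=\widehat K e^{2m\xi}$ in $B_{1/2}$ with vanishing Navier data ($\psi=\Delta\psi=\dots=\Delta^{m-1}\psi=0$ on $\partial B_{1/2}$) and $h:=\xi-\psi$ is polyharmonic in $B_{1/2}$. Because $\|\widehat K e^{2m\xi}\|_{L^1(B_{1/2})}<\varepsilon$, choosing $\varepsilon$ small enough (depending only on $m$ and the constant in Lemma \ref{a2m}) the Br\'ezis--Merle type Lemma \ref{a2m} yields $\|e^{2mq|\psi|}\|_{L^1(B_{1/2})}\le C$ for some fixed $q>2m$, uniformly in $v$ and $t$; in particular $\|\psi\|_{L^P(B_{1/2})}\le C$ for every $P<\infty$. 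On the other hand $\phi$ maps the annulus $A:=\{\tfrac14\le|x|\le\tfrac12\}$ into $B_{4R_0}$, so Lemma \ref{local} gives $\xi\le C$ on $A$ and Lemma \ref{local-derivatives} gives $|\nabla^\ell\xi|\le C$ on $A$ for $1\le\ell\le2m-1$, all uniform in $v,t$; since then $\widehat K e^{2m\xi}\in L^\infty(A)$, interior elliptic estimates for $(-\Delta)^m\psi=\widehat K e^{2m\xi}$ together with the $L^P$-bound on $\psi$ give $\|\psi\|_{C^{2m-1}(A')}\le C$ on a slightly smaller annulus $A'$.

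The main obstacle, and the only genuinely delicate point, is that $(-\Delta)^m$ has no maximum principle for $m\ge2$, so one cannot directly bound a polyharmonic function from above inside a ball by its boundary values. I would circumvent this with the following elementary one‑sided Navier estimate: if $g$ is polyharmonic in $B_\rho$ with $g\le\Lambda$ on $\partial B_\rho$ and $|\Delta^j g|\le\Lambda$ on $\partial B_\rho$ for $1\le j\le m-1$, then $g\le C(m,\rho)\Lambda$ in $B_\rho$; indeed, subtracting the Navier solution with data $(0,\Delta g,\dots,\Delta^{m-1}g)$, whose size is controlled by $\Lambda$ since Navier problems decouple into $m$ second‑order Dirichlet problems, leaves a genuinely \emph{harmonic} function with boundary value $g|_{\partial B_\rho}\le\Lambda$, to which the usual maximum principle applies. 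Applying this to $h$ on a sphere $\partial B_\rho\subset A'$ with $\rho>\tfrac14$ --- where $h|_{\partial B_\rho}=(\xi-\psi)|_{\partial B_\rho}\le C$ and $|\Delta^j h|_{\partial B_\rho}|\le|\Delta^j\xi|+|\Delta^j\psi|\le C$ by the bounds above --- gives $h\le C$ on $B_\rho\supset B_{1/4}$. Hence $e^{2m\xi}=e^{2mh}e^{2m\psi}\le e^{2mC}e^{2m|\psi|}$ on $B_\rho$, so $\|\widehat K e^{2m\xi}\|_{L^q(B_\rho)}\le C$, and one final bootstrap finishes: writing $\xi=\Psi+H$ on $B_\rho$ with $\Psi$ the Navier solution of $(-\Delta)^m\Psi=\widehat K e^{2m\xi}$ in $B_\rho$ --- so $\|\Psi\|_{C^{2m-1}(\overline{B_\rho})}\le C$ by $L^q$-elliptic regularity and Sobolev embedding ($q>2m$) --- and $H:=\xi-\Psi$ polyharmonic, the one‑sided estimate applied to $H$ on a sphere $\partial B_{\rho'}\subset A'$ with $\tfrac14<\rho'<\rho$ (using $\xi\le C$ and $|\Delta^j\xi|\le C$ on $A\supset\partial B_{\rho'}$ and $\|\Psi\|_{C^{2m-1}}\le C$) gives $H\le C$ in $B_{\rho'}$, hence $\xi=\Psi+H\le C$ in $B_{1/4}$, uniformly in $v,t$.

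Finally $\phi$ maps $B_{1/4}$ onto $\{|y|>4R_0\}$, while $\{R_0\le|y|\le4R_0\}\subset B_{4R_0}$ is already covered by Lemma \ref{local}; combining, $\bar w\le C$ on $\rn\setminus B_{R_0}$, i.e. $\xi_{R_0}\le C$ on $B_1$ with $C$ independent of $v$ and $t$. Note that the sign of $K$ (hence of $\alpha$) plays no role in this argument: in the negative case one even has $\psi\le 0$ and the bootstrap is trivial, and no volume restriction is needed here, the restriction $V<\vol(S^{2m})$ in Theorem \ref{thm_2} entering only through Lemma \ref{local}.
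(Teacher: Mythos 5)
Your argument is correct and follows essentially the same route as the paper's proof: Kelvin transform via \eqref{induction}, a Navier decomposition whose zero-data part is controlled by the Br\'ezis--Merle type Lemma \ref{a2m} thanks to the smallness from Lemma \ref{int<epsilon}, the polyharmonic remainder controlled by the decoupled (iterated) maximum principle with boundary data bounded through Lemmas \ref{local} and \ref{local-derivatives}, and a final $L^p$ elliptic estimate to conclude. The differences are only organizational: you work on interior balls and annuli with a second bootstrap decomposition instead of decomposing once on all of $B_1$ as the paper does, and you additionally justify the removability of the singularity of $\xi$ at the origin, a point the paper's proof leaves implicit.
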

\begin{proof}
 Using (\ref{induction}) for $n=2m$ and $k=m$ and recalling that
$$(-\Delta)^m\bar w=Ke^{2m\bar w}\quad \text{in }\rn\setminus B_1, $$ 
we have
 \begin{align}
  (-\Delta)^m\xi(x)&=\frac{R_0^{2m}}{|x|^{4m}}((-\Delta)^m\bar{w})\left(\frac{R_0x}{|x|^2}\right) \notag\\
&  =\left(\frac{R_0}{|x|^2}\right)^{2m}K\left(\frac{R_0x}{|x|^2}\right)e^{2m\xi(x)}\notag\\
& =:f(x). \notag
 \end{align}
Then with the change of variable $y=\frac{R_0x}{|x|^2}$ and Lemma \ref{int<epsilon} we obtain for $R_0=R_0(\varepsilon)$ large enough (and $\varepsilon>0$ to be fixed later)
$$\int_{B_1}f(x) dx<\varepsilon.$$
We write $\bar{\xi}:=\xi_1+\xi_2$, where
\[
\left\{
\begin{array}{ll}
(-\Delta)^m\xi_1  =f & \text{in }B_1 \\
(-\Delta)^k\xi_1  =0 & \text{on }\partial B_1 \text{ for }k=0,1,2,..,m-1\\
\end{array}
\right.
\]
and
\[
\left\{
\begin{array}{ll}
(-\Delta)^m\xi_2  =0 & \text{in }B_1\\
(-\Delta)^k\xi_2  =(-\Delta)^k\xi& \text{on } \partial B_1 \text{ for }k=1,2,..,m-1 \\
\xi_2= \xi^+:=\max\{\xi,0\}& \text{on }\partial B_1. 
\end{array}
\right.
\]
Iteratively using the \emph{maximum principle} it is easy to see that 
\begin{equation}\label{xibarxi}
\xi\leq\bar{\xi}\text{ in }B_1.
\end{equation}

Now fix $\varepsilon>0$ small enough (and consequently $R_0=R_0(\varepsilon)>0$ large enough) so that by Lemma \ref{a2m} below, there exists $p>1$ 
such that $e^{2m\xi_1}$ is bounded in ${L^p(B_1)}$. As usual this bound, as well as $\varepsilon$, $R_0$ are independent of $t$ and $v$.  

Since $|\Delta^k\xi_2|$ is uniformly bounded on $\partial B_1$ for $k=0,1,2,...,m-1$ by Lemma \ref{local-derivatives} and 
$\bar{w}^+$ is uniformly  bounded on  $\partial B_{R_0}$ by Lemma \ref{local}, so that $\xi^+$ is uniformly bounded on $\partial B_1$,  by the maximum principle we get uniform bounds of $\xi_2$ in $B_1$.
 Hence, noticing that
$$\frac{R_0^{2m}}{|x|^{4m}}K\left( \frac{R_0x}{|x|^2}\right)\le C\quad \text{for }x\in B_1$$
by \eqref{decayK},  and using \eqref{xibarxi}, we can bound
 \begin{align}
  \|f\|_{L^p(B_1)} & \leq C \|e^{2m\xi}\|_{L^p(B_1)} 
 \notag\\
& \leq C \|e^{2m\bar \xi}\|_{L^p(B_1)}
 \notag\\
  &\leq C\|e^{2m\xi_1}\|_{L^p(B_1)} \| e^{2m\xi_2}\|_{L^\infty(B_1)}\notag\\ 
  &\leq C. \notag
 \end{align}
Consequently by elliptic estimates and Sobolev embedding there exists a conastant $C>0$ (independent of $v$ and $t$) such that 
$$\|\xi_1\|_{L^\infty(B_1)}\le C'\|\xi_1\|_{W^{2m,p}(B_1)}\leq C,$$
and therefore
$$\xi \le \bar \xi\le |\xi_1|+|\xi_2|\leq C \quad \text{in }B_1,$$
with $C$ not depending on $v$ and $t$.
\end{proof}

\section{Local uniform upper bounds for the equation $(-\Delta)^m u =Ke^{2mu}$}

Here we state a slightly simplified version of Theorem 1 from \cite{LM-quantization} which we will use to prove the uniform upper bound of Theorem \ref{lemma_1} below. This theorem was originally proved by F. Robert \cite{rob} in dimension $4$ and under the assumption $V_k>0$, and is a delicate counterpart to the blow-up analysis initiated by H. Br\'ezis and F. Merle \cite{BM} in dimension $2$. The crucial fact which we shall use is that in order to lose compactness $V_0$ must be positive somewhere \emph{and} $\|V_ke^{2mu_k}\|_{L^1}$ must approach or go above $\Lambda_1:=(2m-1)!\vol(S^{2m})$.

\begin{thm}[\cite{LM-quantization}]\label{useful}
 Let $\Omega\subseteq\rn$ be a connected set. Let $(u_k)\subset C^{2m}_{\mathrm{loc}}(\Omega)$ be such that
 \begin{align}
  (-\Delta)^mu_k=V_ke^{2mu_k}~~in~~\Omega \notag
 \end{align}
where $V_k\rightarrow V_0$ in $C^0_{\mathrm{loc}}(\Omega)$
and, for some $C_1,C_2>0$, 
\[
 \int_{\Omega}e^{2mu_k}dx\leq C_1, \quad  \int_\Omega |\Delta u_k|dx\leq C_2.
\]
Then one of the following is true:
\begin{itemize}
 \item [(i)] up to a subsequence $u_k\rightarrow u_0$ in $C^{2m-1}_{\mathrm{loc}}(\Omega)$ for some 
 $u_0\in C^{2m}(\Omega)$, or
 \item [(ii)] there is a finite (possibly empty) set $S=\{x^{(1)},....,x^{(I)}\}\subset\Omega$ such that $V_0(x^{(i)})>0$ for $1\le i\le I$, and up to a subsequence $u_k\rightarrow -\infty$ locally uniformly in $\Omega\setminus S$, and
\end{itemize}
\begin{align}
 V_ke^{2mu_k}dx\rightharpoonup \sum_{i=1}^I\alpha_i\delta_{x^{(i)}}  \notag
\end{align}
in the sense of measures in $\Omega$, where
$$\alpha_i=L_i\Lambda_1 \text{ for some } L_i\in\mathbb{N}\setminus\{0\},\quad \Lambda_1:=(2m-1)!\vol(S^{2m}).$$
In particular, in case $(ii)$ for any open set $\Omega_0\Subset\Omega$ with $S\subset\Omega_0$ we have
\begin{align}
 \int_{\Omega_0}V_ke^{2mu_k}\rightarrow L\Lambda_1 \text{ for some }L\in\mathbb{N},\text{ and }  L=0 \Leftrightarrow S=\emptyset.     \label{quant}
\end{align}
\end{thm}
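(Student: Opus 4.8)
\emph{Overall strategy.} I would run the concentration--compactness analysis for exponential nonlinearities in the polyharmonic setting, in the spirit of H.~Br\'ezis and F.~Merle \cite{BM} (dimension $2$), F.~Robert \cite{rob} (dimension $4$) and L.~Martinazzi (general even dimension): first isolate a finite concentration set $S$ and prove compactness off $S$, then establish the dichotomy (i)--(ii) on $\Omega\setminus S$, and finally blow up near each point of $S$ to obtain the quantization. The difference from the classical $2$-dimensional theory, and the reason the hypothesis $\int_\Omega|\Delta u_k|\,dx\le C_2$ appears, is that here one must also prevent $\Delta u_k$ from concentrating. \emph{Step 1 (localization).} Since $V_k\to V_0$ in $C^0_{\mathrm{loc}}(\Omega)$ and $\int_\Omega e^{2mu_k}\,dx\le C_1$, the measures $|V_k|e^{2mu_k}\,dx$ have locally bounded mass, so after passing to a subsequence $|V_k|e^{2mu_k}\,dx\rightharpoonup\mu$ and $V_ke^{2mu_k}\,dx\rightharpoonup\nu$ with $|\nu|\le\mu$. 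Let $\varepsilon_0$ be the critical threshold of the Br\'ezis--Merle type estimate of Lemma~\ref{a2m} (the relevant value being $\varepsilon_0=\gamma_m=\Lambda_1/2$), and set $S:=\{x\in\Omega:\mu(\{x\})\ge\varepsilon_0\}$; this is finite because each of its points carries mass at least $\varepsilon_0$ while the total mass is finite. For $x_0\in\Omega\setminus S$ one has $\mu(\overline{B_r(x_0)})<\varepsilon_0$ for $r$ small, hence $\limsup_k\int_{B_r(x_0)}|V_k|e^{2mu_k}<\varepsilon_0$; applying Lemma~\ref{a2m} to the Riesz-type potential of $(-\Delta)^m u_k$ on $B_r(x_0)$, together with a local upper bound for the polyharmonic remainder (established in Step 2), yields $p>1$ with $e^{2mu_k}$ bounded in $L^p(B_{r/2}(x_0))$; thus $(-\Delta)^m u_k=V_ke^{2mu_k}$ is bounded in $L^p_{\mathrm{loc}}(\Omega\setminus S)$.

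\emph{Step 2 (dichotomy off $S$).} On a ball $B\Subset\Omega\setminus S$, write $u_k=a_k+b_k$ with $(-\Delta)^m a_k=V_ke^{2mu_k}$ and vanishing Navier data, so that $a_k$ is bounded in $W^{2m,p}(B)\hookrightarrow C^0(\overline B)$ (here $p>1$), while $b_k$ is polyharmonic in $B$. From $\|\Delta u_k\|_{L^1(\Omega)}\le C_2$ and $\Delta a_k$ bounded in $L^1(B)$ one gets $\Delta b_k$ bounded in $L^1_{\mathrm{loc}}$, hence --- being $(m-1)$-polyharmonic --- bounded in $C^\infty_{\mathrm{loc}}(B)$ by interior estimates for polyharmonic functions; therefore $b_k$ is a $C^{1,\alpha}_{\mathrm{loc}}$-bounded term plus a harmonic function $c_k$, and $u_k=c_k+O(1)$ locally with $c_k$ harmonic. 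By $\int e^{2mu_k}\le C_1$ and Jensen's inequality $c_k$ is locally bounded above, so the Harnack inequality applied to the nonnegative harmonic functions $\mathrm{const}-c_k$ yields the alternative: either $\{c_k\}$ is locally uniformly bounded, or $c_k\to-\infty$ locally uniformly; since $\Omega\setminus S$ is connected ($2m\ge2$) the alternative is global. In the first case $u_k$ is bounded in $C^0_{\mathrm{loc}}(\Omega\setminus S)$, bootstrapping the equation gives $u_k\to u_0$ in $C^{2m-1}_{\mathrm{loc}}(\Omega\setminus S)$, and necessarily $S=\emptyset$: otherwise, near $x^{(i)}\in S$, the representation formula would give $u_0(x)\sim\frac{\alpha_i}{\gamma_m}\log\frac{1}{|x-x^{(i)}|}$ with $\alpha_i\ge\varepsilon_0=\gamma_m$, forcing $\int_{B_r(x^{(i)})}e^{2mu_0}=+\infty$ and contradicting $\int_{B_r(x^{(i)})}e^{2mu_k}\le C_1$ and Fatou; this is case (i). In the second case $u_k\to-\infty$ locally uniformly in $\Omega\setminus S$, the divergence part of case (ii).

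\emph{Step 3 (blow-up near $S$).} In case (ii) we have $e^{2mu_k}\to0$ locally uniformly off $S$, so $\mu,\nu$ are carried by $S$ and $\nu=\sum_{i=1}^I\alpha_i\delta_{x^{(i)}}$ with $\alpha_i:=\lim_{r\to0}\lim_k\int_{B_r(x^{(i)})}V_ke^{2mu_k}$. Fix $i$. Then $V_0(x^{(i)})\ne0$, for $V_0(x^{(i)})=0$ would give $\|V_k\|_{L^\infty(B_r(x^{(i)}))}\to0$ as $r\to0$ (after $k\to\infty$), hence $\mu(\{x^{(i)}\})=0$. Choosing $x_k\to x^{(i)}$ with $u_k(x_k)=\max_{\overline{B_\rho(x^{(i)})}}u_k\to+\infty$ and rescaling by $\mu_k:=e^{-u_k(x_k)}$, the functions $\eta_k(y):=u_k(x_k+\mu_k y)-u_k(x_k)\le 0$ satisfy $\eta_k(0)=0$, $(-\Delta)^m\eta_k=V_k(x_k+\mu_k y)e^{2m\eta_k}$ and $\int_{B_R}e^{2m\eta_k}\le C_1$; using $\eta_k\le0$ (so the right-hand side is locally bounded) and the representation of Lemma~\ref{lemmav}, $\eta_k\to U$ in $C^{2m-1}_{\mathrm{loc}}(\rn)$, where $(-\Delta)^m U=V_0(x^{(i)})e^{2mU}$ and $0<\int e^{2mU}\le C_1$. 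If $V_0(x^{(i)})<0$, then after normalizing the constant $U$ would solve \eqref{main-2} with finite volume and $\sup U=0$, which is impossible since Theorem~C forces $\alpha=-\frac{2V}{\vol(S^{2m})}<0$ and hence $U\to+\infty$. Therefore $V_0(x^{(i)})>0$. After normalization $U$ solves \eqref{main-1}, and iterating the blow-up over the finitely many concentration scales approaching $x^{(i)}$ (finiteness from $\int e^{2mu_k}\le C_1$) one obtains a finite bubble tree at $x^{(i)}$.

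\emph{Step 4 (quantization --- the main obstacle).} It remains to prove $\alpha_i=L_i\Lambda_1$ with $L_i\in\mathbb{N}\setminus\{0\}$, which is the heart of the matter and is precisely where $\int_\Omega|\Delta u_k|\,dx\le C_2$ is indispensable: it rules out an additional concentration of $\Delta u_k$, which otherwise (as the dimension-four examples of Adimurthi--Robert--Struwe show) produces non-quantized masses. Two points are needed. First, a \emph{no-neck-energy} statement: on the annular regions separating consecutive bubble scales, and between the outermost scale and $x^{(i)}$, the mass $\int V_ke^{2mu_k}$ tends to $0$; this is proved by a Pohozaev-type identity (cf.\ Lemma~\ref{pohozaev}) on these annuli, whose boundary flux terms are controlled precisely by the $L^1$-bound on $\Delta u_k$. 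Second, each bubble $U$ in the tree is forced to be spherical, so that $\int(2m-1)!e^{2mU}=\Lambda_1$: the classification of entire solutions of \eqref{main-1} (Theorem~A and \cite{LM-classification}) gives $U(x)=-\alpha\log|x|-P(x)+C+o(1)$ with $P$ a polynomial of degree $\le 2m-2$ which is constant iff $U$ is spherical, and the $L^1$-control of $\Delta u_k$ excludes a non-constant $P$ (whose Laplacian, being a nonzero polynomial, would make the local $L^1$-norm of $\Delta u_k$ on the bubble region too large). Summing the bubble masses and using the no-neck-energy statement then gives $\alpha_i=L_i\Lambda_1$ with $L_i$ the number of bubbles at $x^{(i)}$, and reading this off on an open set $\Omega_0\Supset S$ gives \eqref{quant}. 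The part I expect to be hardest is exactly carrying out this bubble-tree induction and the associated Pohozaev and neck estimates rigorously, keeping track of all scales and all flux terms.
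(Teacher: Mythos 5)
First, a point of comparison: the paper does not prove Theorem \ref{useful} at all --- it is quoted (in slightly simplified form) as Theorem~1 of \cite{LM-quantization}, whose proof builds on \cite{BM} and \cite{rob}. Your outline does follow the same general strategy as that cited proof: concentration set defined by the Br\'ezis--Merle threshold of Lemma \ref{a2m}, Navier decomposition off $S$ with the $L^1$ bound on $\Delta u_k$ controlling the polyharmonic part, Harnack dichotomy, blow-up at points of $S$, and a Pohozaev/neck analysis for the quantization. As a roadmap it is sound, but as a proof it has genuine gaps, and some of the specific mechanisms you invoke are wrong as stated.

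Concretely: (1) In Step 3 you cannot get $\eta_k\to U$ ``using the representation of Lemma \ref{lemmav}''; that lemma is proved only for the particular fixed-point solution $v\in C_0(\rn)$ (it uses $v\to 0$ at infinity and the Liouville theorem for polyharmonic functions) and is not available for a blow-up limit. One must control the rescaled polyharmonic remainder and show the limit is a \emph{normal} solution (one satisfying the integral equation), which is exactly where $\int_\Omega|\Delta u_k|\,dx\le C_2$ enters in \cite{rob,LM-quantization}. Note also that the naive scaling $\int_{B_R}|\Delta\eta_k|\,dy=r_k^{2-2m}\int_{B_{Rr_k}}|\Delta u_k|\,dx$ has $r_k^{2-2m}\to\infty$, so the $L^1$ bound does \emph{not} pass to the bubble scale; hence your Step 4 claim that this bound directly ``excludes a non-constant $P$'' for the bubble does not work as written --- sphericity of the bubble must be obtained differently (via normality plus the classification, or via $U=o(|x|^2)$). (2) Your exclusion of $V_0(x^{(i)})<0$ is incorrect as stated: Theorem C does not force entire negative-curvature solutions to tend to $+\infty$; e.g.\ the radial solutions it produces have $P$ growing like $|x|^2$ and are bounded above, so a bounded-above blow-up limit is not by itself a contradiction. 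The standard contradiction again uses normality: a normal solution with negative total curvature grows like $+\frac{2V}{\vol(S^{2m})}\log|x|$ and cannot have finite volume. (3) Step 4, i.e.\ $\alpha_i=L_i\Lambda_1$, is the heart of the theorem and is only described, not proved: the bubble-tree induction, the neck estimates and the treatment of all intermediate scales are precisely the technical content of \cite{rob} and \cite{LM-quantization}, and you explicitly defer them. So the proposal should be regarded as a correct high-level outline of the cited proof, not a self-contained argument; in the context of the present paper the appropriate ``proof'' is simply the citation.
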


\begin{thm}\label{lemma_1}
Let $u\in C^{2m}(B_R)$ solve
$$(-\Delta)^mu=Ke^{2mu} \quad\text{in } B_R$$
for a function $K\in C^0(B_R)$ and assume that for given $C_1,C_2>0$ one has
\begin{itemize}
  \item [($a$)] $\int_{B_R}e^{2mu} dx\leq C_1$,
  \item [($b$)] $\int_{B_R}|\Delta u|dx \leq C_2$, 
  \item [($c_1$)] either $\int_{B_R}Ke^{2mu} dx \le\Lambda$ for some $\Lambda < (2m-1)!|S^{2m}|$, or
\item [($c_2$)] $K\le 0$ in $B_R$.
 \end{itemize}
 Then
\begin{align}
 \sup_{B_{R/2}}u\leq C      \notag
\end{align}
where $C$ only depends on $R$, $C_1$, $C_2$, $\Lambda$ (in case ($c_1$) holds and not ($c_2$)) and $K$.
\end{thm}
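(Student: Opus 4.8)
\textbf{Proof plan for Theorem \ref{lemma_1}.}

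The plan is to derive this as a corollary of Theorem \ref{useful} via a contradiction-and-rescaling argument. Suppose the conclusion fails: then there exist a radius $R$, constants $C_1,C_2,\Lambda$ as above, functions $K_k\in C^0(B_R)$, and solutions $u_k\in C^{2m}(B_R)$ of $(-\Delta)^m u_k = K_k e^{2mu_k}$ satisfying $(a)$, $(b)$ and one of $(c_1)$, $(c_2)$, but with $\sup_{B_{R/2}} u_k\to +\infty$. The first step is to pass to a subsequence and extract a limiting curvature: since the bounds $(a)$, $(b)$ are on a fixed ball, I would first note that the $K_k$ must be controlled — and here there is a subtlety, because Theorem \ref{useful} needs $V_k\to V_0$ in $C^0_{\mathrm{loc}}$. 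In the application inside Lemma \ref{local} the relevant $K$ is a single fixed function (or rather $Q = Ke^{2mt\alpha u_0}$ with $t\in(0,1]$ and $K$ fixed, so the family is precompact in $C^0_{\mathrm{loc}}$); so I would either state the theorem for a fixed $K$ (then $V_k\equiv K$ trivially converges) or assume $K_k\to K_0$ in $C^0_{\mathrm{loc}}(B_R)$, which is what is actually used. With that in hand, $(a)$ and $(b)$ are exactly the hypotheses of Theorem \ref{useful} on $\Omega=B_R$.

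Next I would apply Theorem \ref{useful}. Since $\sup_{B_{R/2}} u_k\to+\infty$, alternative $(i)$ (local $C^{2m-1}$ convergence to a finite limit) is impossible — a $C^{2m-1}_{\mathrm{loc}}$-convergent sequence is locally uniformly bounded above. Hence alternative $(ii)$ holds: there is a nonempty finite set $S=\{x^{(1)},\dots,x^{(I)}\}\subset B_R$ with $V_0(x^{(i)})>0$, $u_k\to-\infty$ locally uniformly on $B_R\setminus S$, and $K_ke^{2mu_k}\rightharpoonup\sum_i L_i\Lambda_1\delta_{x^{(i)}}$ with $L_i\ge 1$. Now I derive the contradiction in each case. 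In case $(c_2)$, $K_k\le 0$ forces $V_0\le 0$ everywhere, contradicting $V_0(x^{(i)})>0$; so $S=\emptyset$, contradicting that $S$ is nonempty — done. In case $(c_1)$: pick a ball $B_\rho(x^{(i)})\Subset B_R$ containing the point $x^{(i)}$ and no other point of $S$ (possible since $S$ is finite and discrete); then by the weak-$*$ convergence, with a cutoff test function, $\int_{B_\rho(x^{(i)})} K_ke^{2mu_k}\,dx\to L_i\Lambda_1 \ge \Lambda_1 = (2m-1)!\,\vol(S^{2m})$. But $K_k e^{2mu_k}\to 0$ a.e. outside $S$ and is controlled in $L^1$ away from the concentration points, so $\liminf_k\int_{B_R}K_k e^{2mu_k}\,dx \ge \int_{B_\rho(x^{(i)})}(\cdots)\ge \Lambda_1 > \Lambda$, contradicting hypothesis $(c_1)$ for large $k$.

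One has to be a little careful about the sign issue in the last step: $K_k e^{2mu_k}$ need not be nonnegative, so "$\int_{B_R}\ge\int_{B_\rho}$" is not immediate. The clean way is to write $\int_{B_R} K_ke^{2mu_k} = \int_{B_\rho(x^{(i)})} K_k e^{2mu_k} + \int_{B_R\setminus B_\rho(x^{(i)})} K_k e^{2mu_k}$, observe that the first integral tends to $L_i\Lambda_1$, and that the second is $o(1)$: indeed $u_k\to-\infty$ uniformly on the compact set $\overline{B_{R'}}\setminus B_\rho(x^{(i)})$ for any $R'<R$ containing all of $S$ away from $x^{(i)}$... but near $\partial B_R$ one only has the $L^1$ bound $(a)$ on $e^{2mu_k}$, so one should instead work on a slightly smaller ball $B_{R'}$ from the start — i.e. it suffices to bound $\sup_{B_{R/2}}u_k$, and we may replace $B_R$ by any $B_{R'}$ with $R/2 < R' < R$ and run Theorem \ref{useful} on $\Omega = B_{R'}$, on which $u_k\to-\infty$ uniformly on $\overline{B_{R'}}\setminus\bigcup_i B_\rho(x^{(i)})$, killing the leftover integral. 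I expect this bookkeeping — choosing the right intermediate radius and handling the sign of $K_k$ in the tail integral — to be the only genuinely fiddly point; the conceptual content is entirely carried by Theorem \ref{useful}, and once alternative $(ii)$ is ruled out the bound $\sup_{B_{R/2}}u_k\le C$ follows, with the stated dependence of $C$ since all the hypotheses are quantitative.
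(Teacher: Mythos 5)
Your proposal is correct and follows essentially the same route as the paper's proof: argue by contradiction with a blowing-up sequence for the fixed $K$ (so $V_k\equiv K$ in Theorem \ref{useful}), rule out alternative (i) by the blow-up, use $V_0(x^{(i)})>0$ against ($c_2$) and the quantization \eqref{quant} against ($c_1$), and conclude that $S=\emptyset$, whence $u_k\to-\infty$ locally uniformly, contradicting the blow-up. The extra tail bookkeeping you add in case ($c_1$) is absent from the paper, which simply plays \eqref{quant} against ($c_1$) (implicitly using that $Ke^{2mu_k}\ge 0$ in the only situation where ($c_1$) is invoked, so $\int_{\Omega_0}Ke^{2mu_k}\le\Lambda<\Lambda_1$ gives the contradiction directly); note that your ``replace $B_R$ by $B_{R'}$'' device would not transfer hypothesis ($c_1$) to the smaller ball when $K$ changes sign on the annulus, so it gains nothing over the paper's direct use of \eqref{quant}, though this borderline sign-changing case is equally untreated in the paper itself.
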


\begin{proof}
 Assume that there is a sequence of functions $u_n\in C^{2m}(B_R)$ and a 
 sequence of points $x_n\in B_{R/2}$ such
 that $u_n$ satisfies the conditions ($a$), ($b$), and ($c_1$) or ($c_2$), and assume that
\begin{equation}\label{uninfty}
\lim_{n\to\infty}u_n(x_n)=\infty.
\end{equation}
 
Then we can apply Theorem \ref{useful} with $V_k=K$ for every $k$, and because of \eqref{uninfty}, we clearly are in case (ii) of the theorem.
Assume that $S\ne \emptyset$. Then $K>0$ on $S$, hence condition ($c_2$) does not hold. On the other hand condition ($c_1$) contradicts \eqref{quant}. Then $S=\emptyset$, hence $u_k\to -\infty$ uniformly in $B_{R/2}$, contradicting \eqref{uninfty}.
\end{proof}

\appendix

\section{Appendix}

\subsection{Some useful lemmas}

\begin{lem}[Pohozaev-type identity]\label{pohozaev}
Consider $K\in C^1(\overline{B_R})$ for some $R>1$, and let $u_0\in C^{2m}(\rn)$ be such that $\supp(\Delta^m u_0)\subseteq \overline{B_1}$. 
Let  $\bar{w}\in C^{2m}(\overline{B_R})$ be a  solution of
\begin{align}
 (-\Delta)^m\bar{w}=Ke^{2m\bar{w}}+t\alpha(-\Delta)^mu_0.      \notag
\end{align}
 Then we have
\begin{equation}\label{eqpoho1}
\begin{split} 
\int_{B_R}(x\cdotp\nabla K)e^{2m\bar{w}}dx &+ 2m\int_{B_R}Ke^{2m\bar{w}}dx -
  2mt\alpha\int_{B_1}(x\cdotp\nabla  \bar{w})(-\Delta)^mu_0dx \\
 &=R\int_{\partial B_R}Ke^{2m\bar{w}}d\sigma -mR\int_{\partial B_R}|\Delta^{\frac{m}{2}}\bar{w}|^2d\sigma -
 2m\int_{\partial B_R}fd\sigma,
\end{split}
\end{equation}
where,
$$\displaystyle f(x):=\sum_{j=0}^{m-1}(-1)^{m+j}\frac{x}{R}\cdotp\left(\Delta^{j/2}
 (x\cdotp\nabla \bar{w})\Delta^{(2m-1-j)/2}\bar{w}\right)\quad \text{on }\partial B_R,$$
and for $k$ odd $\Delta^{k/2}:=\nabla\Delta^{(k-1)/2}$.
\end{lem}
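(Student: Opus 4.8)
The statement to prove is the Pohozaev-type identity in Lemma~\ref{pohozaev}.

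\medskip

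\noindent\textbf{Approach.} The plan is to derive the identity by the standard Pohozaev/Rellich procedure: multiply the equation by the scaling vector field applied to $\bar w$, namely by $x\cdot\nabla\bar w$, integrate over $B_R$, and integrate by parts, carefully tracking all boundary terms on $\partial B_R$. The right-hand side of \eqref{eqpoho1} is precisely the collection of boundary contributions that do not vanish, so the work is bookkeeping rather than conceptual.

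\medskip

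\noindent\textbf{Key steps.} First I would handle the term $\int_{B_R}Ke^{2m\bar w}(x\cdot\nabla\bar w)\,dx$. Writing $K e^{2m\bar w}(x\cdot\nabla \bar w) = \frac{1}{2m}\, x\cdot\nabla\!\big(e^{2m\bar w}\big)\,K = \frac{1}{2m}\Big(\diver\!\big(x\,K e^{2m\bar w}\big) - 2m K e^{2m\bar w} - (x\cdot\nabla K)e^{2m\bar w}\Big)$, an application of the divergence theorem produces exactly $-\int_{B_R}(x\cdot\nabla K)e^{2m\bar w}dx - 2m\int_{B_R}Ke^{2m\bar w}dx$ on the interior side and $R\int_{\partial B_R}Ke^{2m\bar w}\,d\sigma$ on the boundary (using $x\cdot\nu = R$ on $\partial B_R$). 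This matches the first two terms on the left of \eqref{eqpoho1} and the first boundary term on the right. Second, the term $t\alpha\int_{B_R}(x\cdot\nabla\bar w)(-\Delta)^m u_0\,dx$ is kept as is on the left; since $\supp(\Delta^m u_0)\subseteq\overline{B_1}\subset B_R$, the domain of integration may be replaced by $B_1$, giving the third term $-2mt\alpha\int_{B_1}(x\cdot\nabla\bar w)(-\Delta)^m u_0\,dx$ after multiplying through by $2m$. Third, and this is the heart of the computation, I would treat $\int_{B_R}(x\cdot\nabla\bar w)(-\Delta)^m\bar w\,dx$. The clean way is to use the known Pohozaev identity for the polyharmonic operator: for a vector field $X = x$ one has a pointwise divergence identity expressing $(x\cdot\nabla\bar w)\Delta^m\bar w$ as a divergence of a bilinear expression in derivatives of $\bar w$ up to order $2m$, plus a multiple of the ``energy density'' $|\Delta^{m/2}\bar w|^2$ (interpreting $\Delta^{m/2}:=\nabla\Delta^{(m-1)/2}$ when $m$ is odd). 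Integrating over $B_R$, the divergence becomes a boundary integral over $\partial B_R$ — this is the term $-2m\int_{\partial B_R}f\,d\sigma$ with $f$ as given — and the energy term, after using that $x\cdot\nu=R$ and collecting the coefficient coming from the $2m$-dimensional scaling, contributes $-mR\int_{\partial B_R}|\Delta^{m/2}\bar w|^2\,d\sigma$. Multiplying the whole identity by $2m$ and assembling the three contributions yields \eqref{eqpoho1}.

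\medskip

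\noindent\textbf{Main obstacle.} The delicate point is the third step: establishing the pointwise Pohozaev/Rellich identity for $(-\Delta)^m$ and correctly identifying the boundary integrand $f$ and the coefficient $-mR$ of the $|\Delta^{m/2}\bar w|^2$ term. One has to integrate by parts $m$ times, at each stage moving one Laplacian across, and keep track of the lower-order boundary terms; the alternating signs $(-1)^{m+j}$ in $f$ arise from this iteration. The identity is most transparently obtained by induction on $m$, or by invoking a general Pohozaev identity for higher-order operators (see e.g. the references on Rellich--Pohozaev identities for the polyharmonic operator). The cleanest exposition is to verify it first for the model term and then observe that the structure of $f$ is forced by requiring the interior remainder to be $2m\,|\Delta^{m/2}\bar w|^2$ up to a divergence; everything else is routine.
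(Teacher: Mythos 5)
Your proposal is correct and follows essentially the same route as the paper: multiply by $x\cdot\nabla\bar w$, treat the curvature term via $\diver(xKe^{2m\bar w})$, restrict the $u_0$ term to $B_1$ by the support assumption, and integrate by parts $m$ times so that in the critical dimension $2m$ only boundary terms survive. The paper makes your third step concrete by invoking the pointwise identity $\Delta^{\frac m2}(x\cdot\nabla\bar w)\,\Delta^{\frac m2}\bar w=\tfrac12\diver\bigl(x|\Delta^{\frac m2}\bar w|^2\bigr)$ (Lemma 14 in \cite{LM-asymptotic}), which produces exactly the $-mR\int_{\partial B_R}|\Delta^{\frac m2}\bar w|^2d\sigma$ term and the boundary integrand $f$ you describe.
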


\begin{proof}
Integrating by parts we find
 \begin{align}
  2m\int_{B_R}(1+x\cdotp\nabla\bar{w})Ke^{2m\bar{w}}dx &=\int_{B_R}K\diver (xe^{2m\bar{w}}) dx \notag \\
 &=-\int_{B_R}(x\cdotp\nabla K)e^{2m\bar{w}} dx+R\int_{\partial B_R}Ke^{2m\bar{w}}  d\sigma.        \notag
 \end{align}
Now
\begin{equation}\label{eqpoho0}
 \int_{B_R}(x\cdotp\nabla\bar{w})Ke^{2m\bar{w}} dx=\int_{B_R}(x\cdotp\nabla\bar{w})(-\Delta)^m\bar{w} dx
 - t\alpha\int_{B_1}(x\cdotp\nabla\bar{w})(-\Delta)^mu_0  dx,  
\end{equation}
and integrating by parts $m$ times the first term on the right-hand side of \eqref{eqpoho0} we find
\begin{equation}\label{eqpoho}
 \int_{B_R}(x\cdotp\nabla\bar{w})(-\Delta)^m\bar{w}  dx
  = \int_{B_R}\Delta^\frac m2(x\cdotp\nabla\bar{w})\Delta^{\frac{m}{2}}\bar{w} dx+\int_{\partial B_R}f d\sigma=: I
\end{equation}
Using 
$$\Delta^\frac m2(x\cdotp\nabla\bar{w})\Delta^\frac m2\bar{w}=\frac{1}{2}\diver (x|\Delta^{\frac{m}{2}}\bar w|^2) $$
(see e.g. \cite[Lemma 14]{LM-asymptotic} for the simple proof) and using the divergence theorem we obtain
$$I= \frac{1}{2}\int_{\partial B_R}R|\Delta^{\frac{m}{2}}\bar{w}|^2 d\sigma + \int_{\partial B_R}f d\sigma,$$
and putting together the above equations we conclude.
\end{proof}

The proof of the following lemma can be found in \cite{LM-classification} (Theorem 7). It extends to arbitrary dimension Theorem 1 of \cite{BM}.

\begin{lem}\label{a2m} Let $f\in L^1(B_R)$ and let $v$ solve
$$\left\{
\begin{array}{ll}
(-\Delta)^m v=f &\textrm{in } B_R\subset \rn,\\
\Delta^k v=0& \textrm{on }\partial B_R \text{ for } k=0,1,\dots,m-1.
\end{array}
\right.
$$
Then, for any $p\in\Big(0,\frac{\gamma_m}{\|f\|_{L^1(B_R)}}\Big)$, we have $e^{2m p|v|}\in L^1(B_R)$ and
$$\int_{B_R}e^{2m p |v|}dx\leq C(p)R^{2m},$$
where $\gamma_m$ is definde by \eqref{gammam}.
\end{lem}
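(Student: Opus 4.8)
The statement is exactly Theorem 7 of \cite{LM-classification}, so in principle one could just cite it; but let me describe how I would reconstruct the argument, since it is a clean adaptation of the Br\'ezis--Merle technique \cite[Theorem 1]{BM} to the polyharmonic setting. The key point is that the solution operator for the Dirichlet problem for $(-\Delta)^m$ with Navier boundary conditions $\Delta^k v = 0$ on $\partial B_R$, $k=0,\dots,m-1$, can be written as an $m$-fold iteration of the second-order Green operator, and hence comes with an \emph{explicit positive kernel}. Concretely, if $G_R(x,y)$ denotes the Green function for $-\Delta$ on $B_R$ with zero boundary data, then the polyharmonic Green function is $G_R^{(m)} = G_R \ast \cdots \ast G_R$ ($m$ factors, composed as operators), it is positive, and — this is the crucial estimate — it satisfies a bound of the form $G_R^{(m)}(x,y) \le c_m \log\frac{C R}{|x-y|} + (\text{bounded terms})$ near the diagonal, with the \emph{same} logarithmic constant $c_m = \frac{1}{\gamma_m}$ that appears in the fundamental solution $(-\Delta)^m \big(\frac{1}{\gamma_m}\log\frac{1}{|x|}\big) = \delta_0$ from \eqref{gammam}. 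In other words, the leading singularity of $G_R^{(m)}$ is governed by $\frac{1}{\gamma_m}\log\frac{1}{|x-y|}$.

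\textbf{Main steps.} First I would reduce to $R=1$ by the scaling $v(x) = \tilde v(x/R)$, under which $\|f\|_{L^1}$ is invariant (the factor $R^{2m}$ in the conclusion comes precisely from the change of variables $dx = R^{2m}d\tilde x$), so it suffices to prove the bound for $B_1$ with the constant $C(p)$. Second, write $v(x) = \int_{B_1} G_1^{(m)}(x,y) f(y)\, dy$ and estimate, using the kernel bound above,
\[
2mp\,|v(x)| \le 2mp \int_{B_1}\Big(\tfrac{1}{\gamma_m}\log\tfrac{C}{|x-y|} + C\Big)|f(y)|\,dy.
\]
Third, apply Jensen's inequality with the probability measure $d\mu(y) = |f(y)|\,dy / \|f\|_{L^1}$: setting $\theta := \|f\|_{L^1}$,
\[
\exp\big(2mp|v(x)|\big) \le e^{C'} \int_{B_1}\Big(\tfrac{C}{|x-y|}\Big)^{\frac{2mp\,\theta}{\gamma_m}}\,d\mu(y).
\]
Fourth, integrate in $x$ over $B_1$ and use Fubini: $\int_{B_1}\exp(2mp|v|)\,dx \le e^{C'}\int_{B_1}\Big(\int_{B_1}(C/|x-y|)^{2mp\theta/\gamma_m}\,dx\Big)d\mu(y)$, and the inner integral is finite and bounded uniformly in $y\in B_1$ precisely when the exponent $\frac{2mp\,\theta}{\gamma_m} < 2m$, i.e. when $p < \frac{\gamma_m}{\theta} = \frac{\gamma_m}{\|f\|_{L^1(B_1)}}$, which is exactly the stated range of $p$. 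This gives the bound with a constant $C(p)$ depending only on $p$ (through how close $\frac{2mp\theta}{\gamma_m}$ is to $2m$) and on the universal constant in the kernel estimate.

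\textbf{The main obstacle.} The delicate point is \emph{not} the Jensen/Fubini bookkeeping, which is routine once one has the right kernel estimate; it is establishing that the Navier polyharmonic Green function on the ball has its diagonal singularity controlled by $\frac{1}{\gamma_m}\log\frac{1}{|x-y|}$ with the \emph{sharp} constant and with a globally bounded (not merely locally bounded) remainder on $B_1\times B_1$. Getting the constant wrong would shrink the admissible range of $p$ and break the statement. One handles this by writing $G_1^{(m)}(x,y) = \frac{1}{\gamma_m}\log\frac{1}{|x-y|} + H(x,y)$ where $H$ is the (smooth, bounded) correction term, and showing $H$ is bounded on $\overline{B_1}\times\overline{B_1}$: for $m\ge 2$ the convolution of Green kernels only improves regularity, so the worst singularity is inherited from a single factor and is logarithmic, and boundary effects contribute only bounded terms since $B_1$ is a fixed nice domain. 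This is precisely the content that is packaged in \cite[Theorem 7]{LM-classification}, extending \cite[Theorem 1]{BM}, so in the write-up I would simply invoke that reference for the kernel estimate and then run the three-line Jensen argument above.
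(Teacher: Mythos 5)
Your proposal is correct and takes essentially the same route as the source: the paper itself gives no proof of Lemma \ref{a2m} but cites \cite{LM-classification} (Theorem 7), whose argument is exactly the Br\'ezis--Merle scheme you outline (Green representation for the Navier problem on the ball, logarithmic kernel bound with the sharp constant $\frac{1}{\gamma_m}$, Jensen's inequality with the probability measure $|f|\,dy/\|f\|_{L^1}$, then Fubini), and your scaling reduction and exponent bookkeeping $2mp\|f\|_{L^1}/\gamma_m<2m \Leftrightarrow p<\gamma_m/\|f\|_{L^1}$ are right. One minor imprecision: the logarithmic singularity is not ``inherited from a single factor'' of the iterated second-order Green function (each factor behaves like $|x-y|^{2-2m}$ in $\rn$); it is the $m$-fold composition that yields the log with constant $\frac{1}{\gamma_m}$ --- but since you defer that kernel estimate to the cited reference, just as the paper defers the whole lemma, this does not affect the argument.
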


\begin{lem}
 Given $u\in C^\infty(\mathbb{R}^n)$, define $\tilde{u}(x):=u\big(\frac{x}{|x|^2}\big)$ for  
 $x\in \mathbb{R}^n\setminus \{0\}$. Then for any $k\in\mathbb{N}$ we have
 \begin{align}
  \Delta^k\left(\frac{1}{|x|^{n-2k}}\tilde{u}(x)\right)=\frac{1}{|x|^{n+2k}}
  (\Delta^ku)\left(\frac{x}{|x|^2}\right),\quad x\in \mathbb{R}^n\setminus\{0\}.\label{induction}   
 \end{align}
\end{lem}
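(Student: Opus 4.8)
The final statement to prove is the Kelvin transform identity \eqref{induction}: for $u\in C^\infty(\mathbb{R}^n)$ and $\tilde u(x)=u(x/|x|^2)$,
\[
\Delta^k\Bigl(\frac{1}{|x|^{n-2k}}\tilde u(x)\Bigr)=\frac{1}{|x|^{n+2k}}(\Delta^k u)\Bigl(\frac{x}{|x|^2}\Bigr).
\]

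\textbf{Plan.} The plan is to argue by induction on $k$. The base case $k=0$ is trivial, since then the identity just reads $\tilde u(x)=u(x/|x|^2)$. The crucial case is $k=1$, which is the classical fact that the Kelvin transform $u\mapsto |x|^{2-n}\tilde u$ intertwines the Laplacian with a conformal weight; I would either quote this or verify it by a direct computation using the inversion map $\iota(x)=x/|x|^2$. Writing $y=\iota(x)$, one has $|y|=1/|x|$, the Jacobian matrix $D\iota(x)=|x|^{-2}(I-2\,\hat x\otimes\hat x)$ is $|x|^{-2}$ times an orthogonal (reflection) matrix, and the pullback metric is conformal to the Euclidean one with factor $|x|^{-4}$. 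The standard transformation rule for the Laplacian under a conformal change in dimension $n$ then gives exactly $\Delta(|x|^{2-n}\tilde u)=|x|^{-n-2}(\Delta u)\circ\iota$; a self-contained check amounts to differentiating $|x|^{2-n}u(x/|x|^2)$ twice and collecting terms, using that $\Delta|x|^{2-n}=0$ away from the origin and that the cross terms arrange themselves into the claimed form.

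\textbf{Induction step.} Assuming the identity for $k$, I would apply the $k=1$ case to the function $\Delta^k u$ in place of $u$. By the inductive hypothesis,
\[
\Delta^k\Bigl(\frac{1}{|x|^{n-2k}}\tilde u(x)\Bigr)=\frac{1}{|x|^{n+2k}}(\Delta^k u)\Bigl(\frac{x}{|x|^2}\Bigr)=\frac{1}{|x|^{n-2(k+1)}}\cdot\frac{1}{|x|^{4}}\,(\Delta^k u)\Bigl(\frac{x}{|x|^2}\Bigr).
\]
Hmm — the cleaner route is: set $w:=\Delta^k u$, so $\widetilde w(x)=w(x/|x|^2)=(\Delta^k u)(x/|x|^2)$. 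The inductive hypothesis says $\Delta^k(|x|^{2k-n}\tilde u)=|x|^{-n-2k}\widetilde w(x)$. Apply $\Delta$ to both sides and use the $k=1$ identity on the right: writing $|x|^{-n-2k}\widetilde w(x)=|x|^{2-n}\cdot(|x|^{-n'}\widetilde w)$ with $n'=n-2k-2$ requires matching the exponent to the dimension, which does not literally work since $\widetilde w$ is an inversion in dimension $n$, not $n'$. So instead I would apply $\Delta$ directly and invoke the $k=1$ identity in the form $\Delta(|x|^{2-n}\widetilde w)=|x|^{-n-2}(\Delta w)\circ\iota$. Thus
\[
\Delta^{k+1}\Bigl(\frac{1}{|x|^{n-2k}}\tilde u\Bigr)=\Delta\Bigl(\frac{1}{|x|^{n+2k}}\widetilde w\Bigr),
\]
and one checks $\frac{1}{|x|^{n+2k}}\widetilde w(x)=\frac{1}{|x|^{2k}}\cdot\frac{1}{|x|^n}\widetilde w(x)$; the obstruction is that the weight $|x|^{n+2k}$ is not the conformal weight $|x|^{n-2}$. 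The honest fix is to note that $\Delta$ and multiplication by $|x|^{-2k}$ interact via an explicit commutator, so I would instead carry the induction on the \emph{pair} $(k,\text{exponent})$, proving simultaneously the more flexible statement that $\Delta\bigl(|x|^{-s}g(x/|x|^2)\bigr)$ has a closed form whenever $s=n-2$, and then chaining.

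\textbf{Main obstacle.} The genuine content is entirely in the $k=1$ conformal-Laplacian identity and in setting up the induction so the exponents line up; once the $k=1$ case is in hand, the cleanest execution is the classical one: the Kelvin transform $\mathcal{K}_n\colon u\mapsto |x|^{2-n}u(x/|x|^2)$ is an involution satisfying $\Delta(\mathcal{K}_n u)=\mathcal{K}_{n+4}(\Delta u)$ — more precisely $\Delta(|x|^{2-n}\tilde u)=|x|^{-2-n}(\Delta u)\circ\iota$ — and iterating this $k$ times, with the weight bookkeeping $|x|^{2-n}\to|x|^{-2-n}\to\cdots$ telescoping to $|x|^{-(n+2k)}$ on the right and, after pulling out the correct overall power, to $|x|^{2k-n}$ on the left, yields \eqref{induction}. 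I expect the only delicate point to be checking that the intermediate weights are harmonic or that their contributions cancel, i.e. verifying the $k=1$ step rigorously (for instance via $\Delta(fg)=f\Delta g+2\nabla f\cdot\nabla g+g\Delta f$ with $f=|x|^{2-n}$, using $\Delta f=0$ and computing $\nabla f\cdot\nabla\tilde u$ against the chain-rule expression for $\nabla\tilde u$ and $\Delta\tilde u$), which is a short but somewhat fiddly computation. Everything else is formal iteration.
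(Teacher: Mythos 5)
You have correctly isolated the classical $k=1$ Kelvin identity $\Delta\bigl(|x|^{2-n}\tilde u\bigr)=|x|^{-n-2}(\Delta u)\circ\iota$, and you even notice the obstruction to the naive induction — but you never resolve it, and the ``telescoping'' you fall back on in your last paragraph is not correct. Two things go wrong. First, applying $\Delta$ to the inductive identity $\Delta^k\bigl(|x|^{2k-n}\tilde u\bigr)=|x|^{-(n+2k)}\,w\circ\iota$ (with $w=\Delta^k u$) computes $\Delta^{k+1}$ of the weight $|x|^{2k-n}$, whereas the statement at step $k+1$ concerns the different weight $|x|^{2(k+1)-n}$; ``pulling out the correct overall power'' $|x|^2$ through $\Delta^{k+1}$ is not free — it is exactly the commutator you would need to compute. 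Second, on the right-hand side the weight $|x|^{-(n+2k)}$ is not the conformal weight $|x|^{2-n}$ (it is not harmonic in $\mathbb{R}^n$ for $k\ge 1$), so the $k=1$ identity does not apply to it: in $\mathbb{R}^n$ one has $\Delta\bigl(|x|^{-(n+2k)}\,w\circ\iota\bigr)\neq |x|^{-(n+2k+2)}(\Delta w)\circ\iota$ in general, there being extra first- and zeroth-order terms (your notation $\Delta\mathcal{K}_n=\mathcal{K}_{n+4}\Delta$ does not iterate: $\Delta\mathcal{K}_{n+4}\neq\mathcal{K}_{n+8}\Delta$ in $\mathbb{R}^n$). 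Your proposed fix (``induction on the pair $(k,\text{exponent})$'', ``a closed form whenever $s=n-2$'') is not a proof: for $s=n-2$ it is just the $k=1$ case again, and the commutator identity needed for the other exponents is never stated, let alone proved.

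That missing identity is precisely what the paper supplies: by a separate elementary induction one proves
\begin{equation*}
\Delta^{k+1}\bigl(|x|^2 f\bigr)=|x|^2\Delta^{k+1}f+2(k+1)(n+2k)\Delta^k f+4(k+1)\,x\cdot\nabla\Delta^k f,
\end{equation*}
then applies it with $f=|x|^{2k-n}\tilde u$, inserts the inductive hypothesis $\Delta^k f=|x|^{-(n+2k)}(\Delta^k u)\circ\iota$, and verifies directly that the three resulting terms combine into $|x|^{-(n+2(k+1))}(\Delta^{k+1}u)\circ\iota$ — a computation of the same flavor as your $k=1$ case but with the non-conformal weight, in which the extra terms are exactly cancelled by the commutator terms. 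Until you state and prove such a formula (or an equivalent generalized Kelvin identity for weights $|x|^{-s}$ with the lower-order terms tracked), your induction step is incomplete: the lemma is not obtained by ``formal iteration'' of the $k=1$ identity.
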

\begin{proof}
We shall prove the lemma by induction on $k\in\mathbb{N}$. 
Notice that for $k=0$ \eqref{induction} is trivial.

For a smooth function $f$ and $g(x):=|x|^2$, we have the formula
  $$ \Delta^{k+1}(fg)=g\Delta^{k+1}f+2(k+1)(n+2k)\Delta^kf+4(k+1)x\cdotp\nabla(\Delta^kf),$$
which can be easily proven by induction on $k\in \mathbb{N}$.
Choosing
$$f(x)=\frac{\tilde{u}(x)}{|x|^{n-2k}}$$
and assuming that \eqref{induction} is
true for a given $k\in \mathbb{N}$, we compute
\begin{align}
\Delta^{k+1}\left(\frac{\tilde u(x)}{|x|^{n-2(k+1)}}\right)   =\;&\Delta^{k+1}(fg) \notag\\
=\;&g\Delta(\Delta^k f)+2(k+1)(n+2k)\Delta^kf+4(k+1)x\cdotp\nabla(\Delta^kf)\notag\\
 =\; &|x|^2\Delta\left
                  (\frac{1}{|x|^{n+2k}}(\Delta^ku)\left(\frac{x}{|x|^2}\right)\right)+2(k+1)(n+2k)\frac{1}{|x|^{n+2k}}(\Delta^ku)\left(\frac{x}{|x|^2}\right)\notag \\
    & +4(k+1)x\cdotp\nabla\left
                  (\frac{1}{|x|^{n+2k}}(\Delta^ku)\left(\frac{x}{|x|^2}\right)\right)\notag \\       
 =\; & \frac{1}{|x|^{n+2(k+1)}}(\Delta^{k+1}u)\left(\frac{x}{|x|^2}\right),        \notag         
\end{align}
hence completing the induction.
\end{proof}

\subsection{Proof of Lemma \ref{compact}}\label{proofcompact}

For any  $R\ge 1$ set
$$A_R:=\{x\in\rn: R<|x|<2R\},\quad A:=A_1=\{x\in\rn: 1<|x|<2\}.$$
Given $f\in W^{2m,p}(A_R)$, define
 \begin{align}
  \tilde{f}(x):=f(Rx), ~~\text{for $x\in A$}. \notag
 \end{align}
 For $|\beta|\leq 2m$, we have
\begin{align}
 \int_A|D^{\beta}\tilde{f}(x)|^pdx &= R^{p|\beta|}\int_{A}|(D^{\beta}f)(Rx)|^pdx \notag\\
 & = R^{p|\beta|-2m}\int_{A_R}|D^{\beta}f(x)|^pdx. \notag
\end{align}
From the embedding $W^{2m,p}(A)\hookrightarrow C^0(A)$ there exists a constant $S>0$,
such that 
\begin{align}
 \|u\|_{C^0(A)}\leq S\|u\|_{W^{2m,p}(A)}, ~~\text{for all $u\in W^{2m,p}(A)$}. \notag
\end{align}
Hence
\begin{equation}\label{vanishing_at_infty}
\begin{split}
 \|f\|_{C^0(A_R)} & =\|\tilde f\|_{C^0(A)} \\
 & \leq S\|\tilde{f}\|_{W^{2m,p}(A)} \\
 & =S\sum_{|\beta|\leq 2m}\|D^{\beta}\tilde{f}\|_{L^p(A)} \\
 & = S\sum_{|\beta|\leq 2m}R^{|\beta|-2m/p}\|D^{\beta}f\|_{L^p(A_R)} \\
 & \le C S\sum_{|\beta|\leq 2m}R^{-2m/p-\delta}
          \|(1+|x|^2)^{\frac{\delta+|\beta|}{2}}D^{\beta}f\|_{L^p(A_R)} \\
 & \leq CSR^{-\gamma}\|f\|_{M_{2m,\delta}^p}, \quad \gamma=2m/p+\delta>0.        
\end{split}
\end{equation}
Since $R\ge 1$ is arbitrary \eqref{vanishing_at_infty} and on $B_2$ we have
\begin{equation}\label{sob}
\|f\|_{C^0(B_2)}\le S'\|f\|_{W^{2m,p}(B_2)}\le CS' \|f\|_{M_{2m,\delta}^p},
\end{equation}
we conclude that $M_{2m,\delta}^p(\rn)\subset C_0(\rn)$, and actually 
\begin{equation}\label{limfnR}
\sup_{n\in\mathbb{N}} \|f_n\|_{M_{2m,\delta}^p}<\infty\quad \Rightarrow\quad \lim_{R\to\infty} \sup_{n\in \mathbb{N}}\|f_n\|_{C^0(A_R)}= 0.
\end{equation}
By \eqref{vanishing_at_infty} and \eqref{sob}, on any compact set $\Omega\Subset\rn$ the sequence $\|f_n\|_{W^{2m,p}(\Omega)}$ is bounded and from the compact embedding $W^{2m,p}(\Omega)\hookrightarrow C^0(\Omega)$, we can extract a subsequence converging in $C^0(\Omega)$. Then up to choosing $\Omega=B_n$ and extracting a diagonal subsequence we have $f_n\to f$ locally uniformly for a continuous function $f$, and actually $f\in C_0(\rn)$ and the convergence is globally uniform thanks to \eqref{limfnR}.
\hfill $\square$

\subsection{Condition \eqref{poly3} does not imply \eqref{poly}}

\begin{prop}\label{proppoly} For $n\ge 2$ there exists a polynomial $P$ of degree $4$ in $\mathbb{R}^n$ satisfying \eqref{poly3} but not \eqref{poly}.
\end{prop}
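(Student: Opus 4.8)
The plan is to exhibit an explicit degree-$4$ polynomial that blows up at infinity in every direction but whose radial derivative $x\cdot\nabla P$ fails to tend to $+\infty$ along some curve going to infinity. Since we only need to handle $n\ge 2$, it suffices to construct the example in two variables $x_1, x_2$ and pad with $+\sum_{j\ge 3}x_j^2$ (or simply note $P$ may be taken independent of the remaining coordinates after adding a positive quadratic term, which does not affect either condition). So I would set $P(x) = (x_1^2 - x_2)^2 + x_2^2$ or a close variant. The first summand vanishes on the parabola $x_1^2 = x_2$, but along that parabola the second summand $x_2^2 = x_1^4 \to \infty$, so $P \to \infty$ there; away from the parabola the first summand grows, and one checks easily that $P(x)\to\infty$ as $|x|\to\infty$ in general (for instance because $P(x)\ge \tfrac12 x_1^4 - C + \tfrac12 x_2^2$ type bounds, or by a direct case analysis on whether $|x_2|$ is comparable to $x_1^2$).

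Next I would compute the radial derivative. With $P = (x_1^2-x_2)^2 + x_2^2$ we get
\[
x\cdot\nabla P = 2x_1^2\cdot 2(x_1^2-x_2) + x_2\bigl(-2(x_1^2-x_2)+2x_2\bigr)
= 4x_1^2(x_1^2-x_2) - 2x_1^2 x_2 + 4x_2^2.
\]
Now I restrict to the curve $x_2 = x_1^2 + c$ for a suitable constant (or a slowly varying function) $c$ chosen so that the dominant terms cancel. On $x_2 = x_1^2$: $x\cdot\nabla P = -2x_1^4 + 4x_1^4 = 2x_1^4\to\infty$, which is not yet a counterexample, so I would instead track more carefully: pick the curve where $x_1^2 - x_2$ is chosen to make $4x_1^2(x_1^2-x_2)$ cancel the $-2x_1^2 x_2 \approx -2x_1^4$ term. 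Setting $x_1^2 - x_2 = \tfrac{x_2}{2}$, i.e. $x_2 = \tfrac23 x_1^2$, gives $x\cdot\nabla P = 4x_1^2\cdot\tfrac{x_2}{2} - 2x_1^2 x_2 + 4x_2^2 = 4x_2^2 = \tfrac{16}{9}x_1^4$, still positive. This suggests that this particular $P$ does not work, and the real construction needs a genuinely degenerate direction; the standard trick is to take $P(x_1,x_2) = x_1^2 + (x_1 x_2 - 1)^2$ or $P = x_2^2 + (x_1^2 x_2 - 1)^2$-type polynomials where along a hyperbola-like curve one coordinate is forced large while the radial derivative stays bounded or negative. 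So the concrete step is: try $P(x) = x_1^2 + (x_1 x_2 - c)^2$ — but this has degree $4$ only in the mixed term — and verify (i) $P\to\infty$ since if $|x_1|$ is bounded away from $0$ then $P\ge x_1^2 \to$ fails when $x_1\to 0$... so one must be more careful near the $x_1$-axis. The cleanest known example is $P(x_1,x_2) = (x_1 x_2 - 1)^2 + x_1^2$, which on the hyperbola $x_1 x_2 = 1$ reduces to $x_1^2$, unbounded as $x_1\to\infty$, yet $x\cdot\nabla P = 2x_1 x_2(x_1 x_2 - 1)\cdot 2 + 2x_1^2 = 2x_1^2$ on that hyperbola — again positive. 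The recurring positivity indicates I should pick the curve $x_1 x_2 = 1 + \varepsilon(x_1)$ and optimize.

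Given this, the honest plan is: \textbf{Step 1.} Choose $P(x) = (x_1 x_2^2 - 1)^2 + x_2^2$ (degree $6$ — too big) or rather stay at degree $4$ with $P(x) = (x_1 - x_2^2)^2 + \lambda x_2^2$ and then \emph{search the curve} $x_1 = x_2^2 + \psi(x_2)$ along which $x\cdot\nabla P$ is computed as a function of $x_2$ and $\psi$; solve the resulting scalar condition "$x\cdot\nabla P$ bounded above" for $\psi$. Concretely $x\cdot\nabla P = 2(x_1-x_2^2)(x_1) + x_2[ -4x_2(x_1-x_2^2) + 2\lambda x_2] = 2\psi(x_2^2+\psi) - 4x_2^2\psi + 2\lambda x_2^2$; choosing $\psi = \psi(x_2)\to 0$ appropriately (e.g. $\psi$ a small negative power of $x_2$, or $\psi$ solving $-2x_2^2\psi + \lambda x_2^2 = O(1)$, i.e. $\psi \to \lambda/2$, a constant) makes the $x_2^2$-terms cancel, leaving $x\cdot\nabla P$ bounded while $P \approx \psi^2 + \lambda x_2^2 \to\infty$. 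That is the counterexample. \textbf{Step 2.} Verify $P\to\infty$ on all of $\mathbb{R}^n$ (with the quadratic padding in the extra coordinates this is a two-variable check: if $|x_1 - x_2^2|$ is large, the first term dominates; otherwise $x_1\approx x_2^2$ forces $|x_2|$ large unless both are bounded, and then $\lambda x_2^2$ is controlled — need $\lambda>0$ and a short case split). \textbf{Step 3.} Exhibit the curve from Step 1 explicitly and conclude $\liminf_{|x|\to\infty} x\cdot\nabla P < \infty$, so \eqref{poly} fails while \eqref{poly3} holds. The main obstacle, as the exploratory computation above shows, is pinning down the precise polynomial and curve so that the leading-order terms in $x\cdot\nabla P$ genuinely cancel (many natural-looking choices leave a positive leading term); once the right pair $(P,\text{curve})$ is identified the verification of \eqref{poly3} is a routine case analysis.
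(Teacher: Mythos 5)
Your final construction is the right idea and, after expansion, is essentially the paper's own example in a degenerate form: $(x_1-x_2^2)^2+\lambda x_2^2=x_1^2-2x_1x_2^2+x_2^4+\lambda x_2^2$ is the case $\beta=2$ of the family $x_1^2+x_2^4-\beta x_1x_2^2$ used in the paper (there one takes $\beta\in\big(\sqrt2\,\tfrac43,2\big)$ and shows $x\cdot\nabla P\to-\infty$ along a parabola $x_1=ax_2^2$), with the lower-order term $\lambda x_2^2$, $\lambda>0$, restoring \eqref{poly3}. However, the one quantitative step that makes your counterexample work is stated incorrectly. Along $x_1=x_2^2+\psi$ your own formula gives $x\cdot\nabla P=-2\psi x_2^2+2\psi^2+2\lambda x_2^2$, and none of the choices you offer achieves the cancellation: with $\psi\to0$ (in particular $\psi$ a small negative power of $x_2$) you are left with $2\lambda x_2^2+o(x_2^2)\to\infty$, and with $\psi\to\lambda/2$ you get $\lambda x_2^2+O(1)\to\infty$; in both cases $x\cdot\nabla P$ still blows up, so \eqref{poly} is not contradicted and, as written, there is no counterexample.

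The fix is one line: the $x_2^2$-terms cancel precisely when $\psi$ is asymptotically at least $\lambda$. Taking the constant $\psi=\lambda$, i.e. the curve $x_1=x_2^2+\lambda$, yields $x\cdot\nabla P\equiv2\lambda^2$, bounded, while $P=\lambda^2+\lambda x_2^2\to\infty$ along the curve; any constant $\psi>\lambda$ even gives $x\cdot\nabla P\to-\infty$, exactly as in the paper's argument. With this correction your Step 2 is fine ($P\ge\lambda x_2^2$ and $P\ge(x_1-x_2^2)^2$ show the sublevel sets are bounded, so \eqref{poly3} holds), and the padding by $\sum_{j\ge3}x_j^2$ for $n>2$ goes through since it only adds the nonnegative term $2\sum_{j\ge3}x_j^2$ to the radial derivative and vanishes on your curve. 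Finally, the long exploratory first half (the candidates $(x_1^2-x_2)^2+x_2^2$, $(x_1x_2-1)^2+x_1^2$, the degree-$6$ trial, etc.) establishes nothing and should simply be removed from a final write-up.
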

 
\begin{proof} In $\mathbb{R}^2$ consider $P(x)=P(x_1,x_2)=x_1^2+ x_2^4-\beta x_1x_2^2$, with $\beta<2$.
Then
$$P(x)\ge x_1^2+x_2^4-\beta\left( \frac{x_1^2}{2}+\frac{x_2^4}{2}\right) =\left(1-\frac \beta 2\right) (x_1^2+x_2^4),$$
so that $P$ satisfies \eqref{poly3}. Moreover
$$x \cdot \nabla P(x)=2x_1^2+4 x_2^4-3\beta x_1x_2^2.$$
Choosing $x=(ax_2^2,x_2)$ we obtain
$$(ax_2^2,x_2)\cdot \nabla P(ax_2^2,x_2)=x_2^4(2a^2-3\beta a+4).$$
Then, since for $|\beta|>\sqrt{2}\frac{4}{3}$ the polynomial $2a^2-3\beta a+4$ has positive discriminant, fixing $\beta \in (-\infty,-\sqrt{2}\frac{4}{3})\cup  \left(\sqrt{2}\frac{4}{3},2\right)$  and $a$ such that $2a^2-3\beta a+4<0$ we see that 
$$\liminf_{|x|\to\infty} x\cdot \nabla P(x)\le \lim_{|x_2|\to\infty }(ax_2^2,x_2)\cdot \nabla P(ax_2^2,x_2)=-\infty.$$
This proves the proposition for $n=2$. For $n>2$ it suffices to consider
$$\tilde P(x_1,x_2,\dots,x_n)=P(x_1,x_2)+ \sum_{j=3}^n x_j^2,$$
where $P$ is as before. 
\end{proof}


\begin{thebibliography}{10}
\small

\bibitem{BM} \textsc{Br\'ezis, Haim; Merle, Frank:} \emph{Uniform estimates and blow-up behaviour for solutions of $-\Delta u=V(x)e^u$ in two dimensions}, Comm. Partial Differential Equations \textbf{16} (1991), 1223-1253.


\bibitem{CC} \textsc{Chang, Sun-Yung Alice; Chen, Wenxiong:} \emph{A note on a class of higher order conformally covariant equations}, Discrete Contin. Dynam. Systems \textbf{63} (2001), 275-281.

\bibitem{CY} \textsc{Chang, Sun-Yung Alice; Yang, Paul:} \emph{On uniqueness of solutions of $n$-th order differential equations in conformal geometry}, Math. Res. Lett. \textbf{4} (1997), 91-102.

\bibitem{CL} \textsc{Chen, Wenxiong; Li, Congming:} \emph{Classification of solutions of some nonlinear elliptic equations}, Duke Math. J. \textbf{63} (3) (1991), 615-622.


\bibitem{G-T}
\textsc{Gilbarg, David; Trudinger, Neil:}
\emph{Elliptic partial differential equations of second order.}
Reprint of the 1998 edition. Classics in Mathematics. 
Springer-Verlag, Berlin, 2001. xiv+517 pp. ISBN: 3-540-41160-7

\bibitem{gor} \textsc{Gorin, Evgeni\v i A.:} \emph{Asymptotic properties of polynomials and algebraic functions of several variables}, Russ. Math. Surv. \textbf{16}(1) (1961), 93-119.

\bibitem{GJMS} \textsc{Graham, C. Robin; Jenne, Ralph;  Mason, Lionel;  Sparling, George:} \emph{Conformally invariant powers of the Laplacian, I: existence}, J. London Math. Soc. \textbf{46} no.2 (1992), 557-565.

\bibitem{JMMX} \textsc{Jin, Tianling; Maalaoui, Ali; Martinazzi, Luca; Xiong, Jingag:} \emph{Existence and asymptotics for solutions of a non-local Q-curvature equation in dimension three}, to appear in Calc. Var. Partial Differential Equations (2013).

\bibitem{Lin}
 \textsc{Lin, Chang-Shou:}
 \emph{A classification of solutions of a conformally invariant fourth order equation 
 in $\mathbb{R}^n$}, Comment. Math. Helv. \textbf{73} (1998), no. 2, 206-231.

\bibitem{McOwen}
\textsc{McOwen, Robert C.:}
\emph{The behavior of the Laplacian on weighted Sobolev spaces.}
Comm. Pure Appl. Math. \textbf{32} (1979), no. 6, 783-795. 
 
 
\bibitem{LM-negative} \textsc{Martinazzi, Luca:}  \emph{Conformal metrics on $\mathbb{R}^{2m}$ with constant $Q$-curvature}, Rend. Lincei. Mat. Appl. \textbf{19} (2008), 279-292.

 \bibitem{LM-classification}
  \textsc{Martinazzi, Luca:}
  \emph{Classification of solutions to the higher order Liouville's equation on $\rn$.}
  Math. Z. \textbf{263} (2009), no. 2, 307-329
  
 
  \bibitem{LM-quantization}
   \textsc{Martinazzi, Luca:}
    \emph{Quantization for the prescribed Q-curvature equation on open domains.}
    Commun. Contemp. Math. \textbf{13} (2011), no. 3, 533-551.
 
\bibitem{LM-volume}
 \textsc{Martinazzi, Luca:}
 \emph{Conformal metrics on $\rn$ with constant Q-curvature and large volume, }
 Ann. Inst. Henri Poincar\'e (C) \textbf{30} (2013), 969-982.
 
\bibitem{LM-asymptotic}
   \textsc{Martinazzi, Luca; Petrache, Mircea:}
  \emph{ Asymptotics and quantization for a mean-field
   equation of higher order.} Comm. Partial Differential Equations \textbf{35} (2010), no. 3, 443-464.

\bibitem{rob} \textsc{Robert, Fr\'ed\'eric:} \emph{Quantization effects for a fourth order equation of exponential growth in dimension four}, Proc. Roy. Soc. Edinburgh Sec. A \textbf{137} (2007), 531-553.

   
\bibitem{W-Y} 
\textsc{Wei, Juncheng; Ye, Dong:}
\emph{Nonradial solutions for a conformally invariant 
fourth order equation in $\mathbb{R}^4$.} Calc. Var. Partial Differential Equations \textbf{32} (2008), no. 3, 373-386.



\end{thebibliography}
 \end{document}